%%%%%%%%%%%%%%%%%%%%%%%%%%%%%%%%%%%%%%%%%%%%%%%%%%%%
%% Versión del 26 de agosto de 2014.
%% Versión enviada a arxiv.
%%%%%%%%%%%%%%%%%%%%%%%%%%%%%%%%%%%%%%%%%%%%%%%%%%%%

\documentclass[12pt]{article}
\usepackage{amsmath}
\usepackage{amssymb}
\usepackage{amsthm}
\usepackage[mathscr]{eucal}
\usepackage{latexsym}
\usepackage{amscd}

\setlength{\textwidth}{15.5truecm}
\setlength{\textheight}{22truecm}

\begin{document}
\title{\textbf{Stability of Kronecker coefficients via discrete tomography}}
\author{{\sf Ernesto Vallejo}}
\date{\today}

\voffset -1.5truecm
\oddsidemargin0.5truecm
\evensidemargin0.5truecm

\theoremstyle{plain}
\swapnumbers\newtheorem{lema}{Lemma}[section]
\newtheorem{prop}[lema]{Proposition}
\newtheorem{coro}[lema]{Corollary}
\newtheorem{teor}[lema]{Theorem}
\newtheorem{demo}[lema]{Proof of Theorem 1}
\newtheorem{mteor}[lema]{Theorem (Additive stability)}

\theoremstyle{definition}
\newtheorem{defi}[lema]{Definition}
\newtheorem{defis}[lema]{Definitions}
\newtheorem{ejem}[lema]{Example}
\newtheorem{ejems}[lema]{Examples}
\newtheorem{obse}[lema]{Remark}
\newtheorem{obses}[lema]{Remarks}
\newtheorem{nota}[lema]{Notation}
\newtheorem{intmat}[lema]{Integral matrices}
\newtheorem{binmat}[lema]{Binary matrices}
\newtheorem{morintmat}[lema]{More on integral matrices}
\newtheorem{simetria}[lema]{Symmetries of plane partitions}
\newtheorem{obstru}[lema]{Obstructions to additivity}
\newtheorem{complejidad}[lema]{Complexity}
\newtheorem{getepat}[lema]{Gelfand-Tsetlin patterns}
\newtheorem{bounds}[lema]{Bounds for stability}

\renewcommand{\refname}{\large\bf References}

\def\natural{{\mathbb N}}
\def\noneg{\natural_0}
\def\entero{{\mathbb Z}}
\def\racional{{\mathbb Q}}
\def\real{{\mathbb R}}
\def\complejo{{\mathbb C}}
\def\planod{\natural \times \natural}
\def\vacio{\varnothing}

\def\flecha{\longrightarrow}
\def\asocia{\longmapsto}
\def\sii{\Longleftrightarrow}
\def\vector#1#2{({#1}_1,\dots,{#1}_{#2})}
\def\conjunto#1{\boldsymbol{[}\,{#1}\,\boldsymbol{]}}
\def\particion{\vdash}
\def\implica{\Longrightarrow}
\def\rimplica{\Longleftarrow}
\def\ol#1{\overline{#1}}
\def\fun#1#2#3{{#1}\,\colon {#2} \flecha {#3}}
\def\nume#1{\boldsymbol{[}\,{#1}\,\boldsymbol{]}}
\def\negra#1{\boldsymbol{#1}}
\def\bili#1#2{\langle {#1}, {#2} \rangle}
\def\bilipunto{\bili{\,\cdot\,}{\cdot\,}}
\def\wt#1{\widetilde{#1}}
\def\matricita#1#2#3#4{ \left[\begin{smallmatrix} #1 & #2 \\ #3 & #4
                           \end{smallmatrix}\right] }

\def\sime#1{{\sf S}_{#1}}

\def\cara#1{\chi^{#1}}
\def\permu#1{\phi^{\,#1}}
\def\coefi{{\sf g}(\la,\mu,\nu)}
\def\coefili#1#2#3{{\sf g}({#1},{#2},{#3})}
\def\kron{\chi^\lambda\otimes\chi^\mu}
\def\kronli#1#2{\chi^{#1}\otimes\chi^{#2}}

\def\la{\lambda}
\def\longi#1{\ell({#1})}
\def\prof#1{{\sf d}({#1})}

\def\domina{\trianglerighteq}
\def\dominada{\trianglelefteq}
\def\nodominada{\ntrianglelefteq}

\def\menore{\prec}
\def\menori{\preccurlyeq}
\def\mayore{\succ}
\def\mayori{\succcurlyeq}

\def\abc#1#2#3{{#1}_{{#2}{#3}}}

\def\contenido#1{{\sf cont}({#1})}
\def\forma#1{{\sf sh}({#1})}
\def\cano#1{{\sf C}({#1})}
\def\insertar#1#2#3{{#1}_{#2} \rightarrow ( \cdots  \rightarrow
({#1}_1 \rightarrow {#3}) \cdots )}
\def\kos#1#2{K_{{#1}{#2}}}
\def\kostkasf#1#2{{\sf K}_{{#1}{#2}}}
\def\gtpat#1#2{{\sf ST}({#1},{#2})}
\def\gtele#1#2#3{({#1}_{{#2},{#3}})}
\def\suce#1{({#1}_{c(1)}, \dots, {#1}_{c(pq)})}

\def\lr{{\sf LR}(\alpha,\beta;\nu)}
\def\lrest{{\sf LR}^*(\alpha,\beta;\nu)}
\def\lrd{{\sf LR}^*(\alpha,\beta;\nu)}

\def\dmat{{\sf M}(\lambda,\mu)}
\def\dmatnum{{\sf m}(\lambda,\mu)}
\def\dmata{{\sf M}(\alpha,\beta)}
\def\dmatanum{{\sf m}(\alpha,\beta)}
\def\dmatapi{{\sf M}(\alpha,\beta)_{\gamma}}
\def\dmatapinum{{\sf m}(\alpha,\beta)_{\gamma}}
\def\dmatpili#1#2#3{{\sf M}({#1},{#2})_{#3}}
\def\dmatli#1#2{{\sf M}({#1},{#2})}
\def\tmats{{\sf M}^*(\lambda,\mu,\nu)}
\def\tmatsnum{{\sf m}^*(\lambda,\mu,\nu)}
\def\tmata{{\sf M}^*(\alpha,\beta,\gamma)}
\def\tmatanum{{\sf m}^*(\alpha,\beta,\gamma)}
\def\tmatsc{{\sf M}^*(\lambda,\mu,\nu^{\,\prime})}
\def\tmatscnum{{\sf m}^*(\lambda,\mu,\nu^{\,\prime})}
\def\tmatac{{\sf M}^*(\alpha,\beta,\gamma^\prime)}
\def\tmatacnum{{\sf m}^*(\alpha,\beta,\gamma^\prime)}
\def\tmatali#1#2#3{{\sf M}^*({#1},{#2},{#3})}
\def\tmatalinum#1#2#3{{\sf m}^*({#1},{#2},{#3})}
\def\renglon#1{{\sf row}(#1)}
\def\columna#1{{\sf col}({#1})}
\def\parti#1{\pi(#1)}
\def\grafi#1{{\sf G}({#1})}

\def\flat#1#2{{\sf F}({#1}, {#2})}
\def\matriz{{\sf M}(\negra u,\negra v)}
\def\permuta#1{{\sf P}({#1})}
\def\trapo{{\sf T}(\alpha, \beta)}
\def\transport#1#2{{\sf T}({#1},{#2})}
\def\politopo{\permuta A \cap \fimap \flat}
\def\fimap#1{\Phi({#1})}
\def\fimapinv#1{\Phi^{-1}({#1})}
\def\allmatrices{{\sf M}_{p,\, q}}

\def\partipla{{\sf P}_\nu(\la,\mu)}
\def\matrizd{{\sf M}^*(\lambda,\mu)}
\def\numatrizd{{\sf m}^*(\lambda,\mu)}
\def\matriztres{{\sf M}(\lambda,\mu,\nu)}
\def\numatres{{\sf m}(\lambda,\mu,\nu)}

\def\liri#1#2#3{{\sf lr}({#1},{#2};{#3})}
\def\lirid#1#2#3{{\sf lr}^*({#1},{#2};{#3})}

\def\palabra#1#2#3{{#1}_{#2}\cdots{#1}_{#3}}

\def\sst{semistandard\ tableau\ }
\def\sucesion#1#2{{#1}_1 < {#1}_2 < \cdots < {#1}_{#2}}
\def\wcol#1{w_{\rm col}({#1})}

\def\bili#1#2{\langle{#1}, {#2}\rangle}

%%%%%%%%Tableaux Macros%%%%%%%%%%%%%%%
%%%%%%%%%%%%%%%%%%%%%%%%%%%%%%%%

\setlength\unitlength{0.08em}
\savebox0{\rule[-2\unitlength]{0pt}{10\unitlength}%
\begin{picture}(10,10)(0,2)
\put(0,0){\line(0,1){10}}
\put(0,10){\line(1,0){10}}
\put(10,0){\line(0,1){10}}
\put(0,0){\line(1,0){10}}
\end{picture}}

\newlength\cellsize \setlength\cellsize{18\unitlength}
\savebox2{%
\begin{picture}(18,18)
\put(0,0){\line(1,0){18}}
\put(0,0){\line(0,1){18}}
\put(18,0){\line(0,1){18}}
\put(0,18){\line(1,0){18}}
\end{picture}}
\newcommand\cellify[1]{\def\thearg{#1}\def\nothing{}%
\ifx\thearg\nothing
\vrule width0pt height\cellsize depth0pt\else
\hbox to 0pt{\usebox2\hss}\fi%
\vbox to 18\unitlength{
\vss
\hbox to 18\unitlength{\hss$#1$\hss}
\vss}}
\newcommand\tableau[1]{\vtop{\let\\=\cr
\setlength\baselineskip{-16000pt}
\setlength\lineskiplimit{16000pt}
\setlength\lineskip{0pt}
\halign{&\cellify{##}\cr#1\crcr}}}
\savebox3{%
\begin{picture}(15,15)
\put(0,0){\line(1,0){15}}
\put(0,0){\line(0,1){15}}
\put(15,0){\line(0,1){15}}
\put(0,15){\line(1,0){15}}
\end{picture}}
\newcommand\expath[1]{%
\hbox to 0pt{\usebox3\hss}%
\vbox to 15\unitlength{
\vss
\hbox to 15\unitlength{\hss$#1$\hss}
\vss}}

\maketitle

\begin{abstract}
In this paper we give a new sufficient condition for a general stability
of Kronecker coefficients, which we call it additive stability.
It was motivated by a recent talk of J.~Stembridge at the conference in honor
of Richard P. Stanley's 70th birthday, and it is based on work of the
author on discrete tomography along the years.
The main contribution of this paper is the discovery of the connection
between additivity of integer matrices and stability of Kronecker coefficients.
Additivity, in our context, is a concept from discrete tomography.
Its advantage is that it is very easy to produce lots of examples of
additive matrices and therefore of new instances of stability properties.
We also show that Stembridge's hypothesis and additivity are closely related,
and prove that all stability properties of Kronecker coefficients
discovered before fit into additive stability.

\smallskip
AMS subject classification: 05E10, 20C30, 05E05.

{\em Key Words}: Kronecker coefficient, Kostka number, Schur function, Stability,
Discrete tomography, Additivity, Transportation polytope, Plane Partition.
\end{abstract}

\section{Introduction}
\hfill

Let $\cara\la$ be the irreducible character of the symmetric group $\sime m$
associated to the partition $\la$ of $m$.
It is a major open problem in the representation theory of the symmetric group
in characteristic 0 to find a combinatorial or geometric description
(such as the existing for the Littlewood-Richardson coefficients)
of the multiplicity (usually called Kronecker coefficient)
\begin{equation} \label{ecua:kron}
\coefi = \langle \kron, \cara\nu \rangle
\end{equation}
of $\cara\nu$ in the Kronecker product $\kron$ of $\cara\la$ and $\cara\mu$.
Here, $\langle \cdot, \cdot \rangle$ denotes the
scalar product of complex characters.
Seventy six years ago Murnaghan~\cite{mur} published the first paper on the subject.
There, he stated without proof the following stability property for Kronecker
coefficients (see also~\cite{mur2}):
For each triple of partitions $\la$, $\mu$, $\nu$ of the same size,
there is a positive integer $L$, such that for all $n\ge L$,
\begin{equation} \label{ecua:mur}
\coefili {\la + (n)}{\mu + (n)}{\nu + (n)} = \coefili {\la + (L)}{\mu + (L)}{\nu + (L)}.
\end{equation}
It was first proved by Littlewood~\cite{lit2}.
Since then many proofs of this property have appeared~\cite{bri, cheb, pp3, thi, vestab1, vdiag}.
Estimations of lower bounds $L$ for stability can be found
in~\cite{bor, bri, vestab1, vdiag}.

Recently, a generalization of Murnaghan's stability was discovered~\cite[Thm.~10.2]{vdiag}.
Let $\prof\nu = |\nu| - \nu_1$ denote the \emph{depth} of $\nu$.
There it is shown that if $i \le \min\{\ell(\la), \ell(\mu)\}$,
$\la_i - \la_{i+1} \ge \prof\nu$ and $\mu_i - \mu_{i+1} \ge \prof\nu$ (if $i = \ell(\la)$,
we define $\la_{i+1} = 0$), then for all $n\in \natural$, we have
\begin{equation} \label{ecua:mur-gen}
\coefili {\la + (n^i)} {\mu + (n^i)} {\nu + (ni)} = \coefi.
\end{equation}
The case $i=1$ yields Murnaghan's stability.
Another proof of equation~\eqref{ecua:mur-gen} (with different bounds) can be
found in~\cite{pp3}.

The proof of Theorem~10.2 in~\cite{vdiag} also yields a new more general type of
stability, which includes~\eqref{ecua:mur-gen} and appears here for the first time.
We now describe it.
Let $\la$, $\mu$, $\nu$ be a triple of partitions of the same size and let
$d = \prof \nu$, then for any pair of vectors $\vector it$, $\vector nt \in \natural^t$,
such that $i_1 > i_2 > \cdots > i_t$ and $n_j \ge d$, for all $j\in \nume t$,
\begin{multline} \label{ecua:mur-mas-gen}
\textstyle
{\sf g}\left( \la + \sum_{j=1}^t ({n_j}^{i_j}),\, \mu + \sum_{j=1}^t ({n_j}^{i_j}),\,
\nu + \left( \sum_{j=1}^t n_j i_j \right) \right) \\
\textstyle
= {\sf g}\left( \la + \sum_{j=1}^t (d^{\,i_j}),\, \mu + \sum_{j=1}^t (d^{\,i_j}),\,
\nu + \left( \sum_{j=1}^t d i_j \right) \right)
\end{multline}
Let $\alpha$ be the conjugate partition of $\vector it$, then
$\sum_{j=1}^t (1^{i_j}) = \alpha$ and $\sum_{j=1}^t (d^{\,i_j}) = d \alpha$.
Thus, identity~\eqref{ecua:mur-mas-gen} can be rewritten in the following way
\begin{multline} \label{ecua:mur-mas-gen-bis}
\textstyle
{\sf g}\left( \la + \sum_{j=1}^t ({n_j}^{i_j}),\, \mu + \sum_{j=1}^t ({n_j}^{i_j}),\,
\nu + \left( \sum_{j=1}^t n_j i_j \right) \right) \\
= \coefili {\la + d\alpha}{\mu + d\alpha}{\nu + (d|\alpha|)}.
\end{multline}

Another, apparently different, kind of stability was discovered sometime ago
in~\cite[Thm.~3.1]{estab2}.
There, it is shown that, if $q$, $r$ are positive integers such that
$\longi{\la} \le qr$, $\longi{\mu} \le q$ and $\longi\nu \le r$.
Then for all $n\in \natural$,
\begin{equation} \label{ecua:estab-cubi}
\coefili {\la + (n^{qr})} {\mu + ((nr)^q)} {\nu + ((nq)^r)} = \coefi .
\end{equation}
The proof in~\cite{estab2} uses character theory of the symmetric group.
A different proof of~\eqref{ecua:estab-cubi}, based on the representation
theory of general linear group, can be found in~\cite{mani}.

Even more recently, J. Stembridge announced the following general stability
property for Kronecker coefficients.\footnote{It was presented on June 24, 2014,
in the conference in honor of Richard P. Stanley's 70th birthday.
Based on Stembridge's presentation I prepared this manuscript with the goal
to look at his generalized stability from the point of view of discrete
tomography.
After this paper was finished, I received a copy of~\cite{stem1}.
I'll add along the paper remarks in the form of footnotes explaining the
relation between some results in~\cite{stem1} and others presented here.}
Denote $\permu\la = {\rm Ind}_{\sime \la}^{\sime n}( 1_{\la})$
(see Section~\ref{sec:cara} for details).
Let $\alpha$, $\beta$, $\gamma$ be partitions of the same size, such that
$\coefili \alpha \beta \gamma >0$ and
\begin{equation} \label{ecua:stem}
\bili{\permu{n\alpha} \otimes \permu{n\beta}}{\cara{n\gamma}} = 1,
\text{ for all } n\in \natural.
\end{equation}
Then, for any triple of partitions $\la$, $\mu$, $\nu$ of the same size,
there is a positive integer $L$ such that, for all $n\ge L$,
\begin{equation} \label{ecua:gen-estab-stem}
\coefili{\la + n\alpha}{\mu + n\beta}{\nu + n\gamma} =
\coefili{\la + L\alpha}{\mu + L\beta}{\nu + L\gamma}.\footnote{This stability
property is the main result of Section~6 in~\cite{stem1}.}
\end{equation}

In this paper we will show that practically all these stability properties
fit into what we call \emph{additive stability}.
The main contribution of this paper is the discovery of the connection
between additivity of integer matrices and stability of Kronecker coefficients.
Additivity, in our context, is a concept coming from discrete tomography
(see~\cite{heka1, heka2} for an overview of the area).
In~\cite{flrs, fishe} Fishburn et al. consider the problem of studying finite
subsets in $\real^n$ that are uniquely reconstructible from its
coordinate projection counting functions.
They called them \emph{sets of uniqueness}, and observed that
there was no loss of generality in considering them as subsets of $\natural^n$.
They also introduced that notion of \emph{additive} sets and proved
that it was sufficient for uniqueness, but not necessary.
Later, the author of this paper and Torres-Ch\'azaro considered the case $n=3$,
and, by viewing those sets as 3-dimensional binary matrices, gave a characterization
of sets of uniqueness using the dominance order of partitions~\cite{tova}.
Afterward, the author of this paper introduced a notion of additivity for matrices
with nonnegative integer entries~\cite{aditivo} and used it to give a characterization
of additive sets in the sense of~\cite{flrs}.
The first proof for our characterization of uniqueness~\cite{tova} used
character theory of the symmetric group (this proof can be translated into a
purely combinatorial one~\cite{bru}).
At that time, we thought to apply the representation theory of the symmetric group
to learn about uniqueness.
It turned the other way round.
Direct applications of discrete tomography to Kronecker coefficients
were given in~\cite{avva, vapp}.
These are, to my best knowledge, the first applications of discrete tomography
to Kronecker coefficients.
Here, we provide another application: we use the notion of additivity of
integer matrices to prove a general stability for Kronecker coefficients.

We apply here ideas from~\cite{onva, sava, tova, vapir, vapp, aditivo, minidosq, elenot, libro}
to prove additive stability, give examples and get some related results.
In order to state our main theorem we need the following definition, which is fundamental
for this paper.
A matrix $A =(a_{i,j})$ of size $p\times q$ with nonnegative integer entries is called
\emph{additive} (see~\cite[Thm.~1]{aditivo}), if there exist real numbers
$x_1, \dots, x_p$, $y_1, \dots, y_q$, such that
\begin{equation*}
a_{i,j} > a_{k,l} \implica x_i + y_j > x_k + y_l,
\end{equation*}
for all $i$, $k\in \nume p$ and all $j$, $l \in \nume q$.
Let $\alpha$, $\beta$ be partitions of the same size.
Denote by $\dmata$ the set of matrices $A$ with nonnegative integer entries,
row-sum vector $\alpha$ and column-sum vector $\beta$.
Also denote by $\parti A$ the sequence of entries of $A$ arranged in weakly
decreasing order.
Our main result is

\begin{mteor} \label{teor:m}
Let $\alpha$, $\beta$, $\gamma$ be partitions of the same size.
If there is an additive matrix $A\in \dmatli \alpha \beta$ with $\parti A = \gamma$,
then, for any triple of partitions $\la$, $\mu$, $\nu$ of the same size, the sequence
$\{\coefili {\la + n \alpha}{\mu + n \beta}{\nu + n \gamma}\}_{n\in \noneg}$
is weakly increasing and bounded from above.
In particular, there is a positive integer $L$ such that, for all $n \ge L$,
\begin{equation*}
\coefili {\la + n \alpha}{\mu + n \beta}{\nu + n \gamma} =
\coefili {\la + L \alpha}{\mu + L \beta}{\nu + L \gamma}.
\end{equation*}
\end{mteor}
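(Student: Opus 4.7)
The plan is to realize $\coefili{\lambda+n\alpha}{\mu+n\beta}{\nu+n\gamma}$ as the cardinality of a combinatorial set, to exhibit a canonical injection from the set at level $n$ into the set at level $n+1$ (which gives weak monotonicity), and to produce an upper bound independent of $n$ (which gives boundedness). Since the sequence takes values in $\noneg$, these two properties together force it to be eventually constant, and any threshold $L$ at which constancy begins yields the displayed equality.

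First I would appeal to one of the known combinatorial descriptions of the Kronecker coefficient in terms of $3$-dimensional nonnegative integer matrices with prescribed one-dimensional margins, as developed in the discrete-tomography line of work cited in the introduction. Concretely, $\coefi$ admits expressions as counts (possibly refined by signs or inclusion-exclusion) of elements of sets such as $\tmats$ and related variants, indexed by the marginal partitions $\la,\mu,\nu$. Shifting the triple by $n(\alpha,\beta,\gamma)$ then translates into a shift of the margins of these $3$-dimensional matrices, which is the combinatorial object I propose to track.

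Next, the additive matrix $A\in\dmatli\alpha\beta$ with $\parti A=\gamma$ serves as the lifting device. Given any $3$-dimensional matrix $M$ with margins $(\lambda+n\alpha,\mu+n\beta,\nu+n\gamma)$ appearing in the combinatorial formula, I would construct a matrix with margins $(\lambda+(n+1)\alpha,\mu+(n+1)\beta,\nu+(n+1)\gamma)$ by inserting an extra $A$-layer, placed in the unique configuration dictated by $A$'s additive structure. The additivity hypothesis---the existence of reals $x_i,y_j$ with $a_{i,j}>a_{k,l}$ forcing $x_i+y_j>x_k+y_l$---is exactly what guarantees that this insertion is compatible with the $3$-dimensional constraints and that the resulting map is injective: no two distinct preimages can produce the same lift because the added $A$-layer is uniquely identified among all elements of $\dmatli\alpha\beta$ whose part sequence equals $\gamma$. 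For boundedness, I would project each counted matrix onto the complement of the $A$-direction inside the relevant transportation polytope; once $n$ is large enough the ``non-$A$ part'' lies in a bounded region depending only on $\lambda,\mu,\nu$, since the $n$-dependent contribution to the margin can be absorbed by copies of $A$.

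The main obstacle I anticipate is verifying that the lift obtained by adding a copy of $A$ produces an object of the correct type and that the correspondence is reversible on its image. Here additivity is not merely convenient but essential: without it there might exist several matrices in $\dmatli\alpha\beta$ sharing the part sequence $\gamma$, and the lift would fail to be canonical. The heart of the proof will therefore be translating the real separation ``$a_{i,j}>a_{k,l}\Rightarrow x_i+y_j>x_k+y_l$'' into a precise combinatorial uniqueness statement---likely via a dominance or extremality argument on the fibres of the marginal map---from which well-definedness of the injection, and hence additive stability, follows.
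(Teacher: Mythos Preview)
Your proposal rests on a premise that is, unfortunately, the central open problem in the subject. You write that $\coefi$ ``admits expressions as counts (possibly refined by signs or inclusion-exclusion) of elements of sets such as $\tmats$'': but there is no known manifestly nonnegative combinatorial rule for the Kronecker coefficient as the cardinality of a set of 3-dimensional matrices (or of anything else). The papers cited in the introduction establish relations between $\tmatanum$ and Kronecker coefficients only in very special cases, typically when uniqueness or additivity forces $\coefili\alpha\beta\gamma=1$. For a generic triple $(\la,\mu,\nu)$ no such description is available, and the hedge ``possibly refined by signs or inclusion-exclusion'' does not rescue the argument: once signs enter, the ``add an $A$-layer'' injection between level-$n$ and level-$(n+1)$ sets no longer implies the inequality $s_n\le s_{n+1}$, because cancellation can go either way.

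The paper's actual proof avoids this difficulty by separating the two halves of the statement and treating each with a different tool. Weak monotonicity is not obtained by an injection at all: it comes from Manivel's semigroup inequality (Theorem~\ref{teor:kron-mani}) together with the fact, proved via additivity, that $\coefili\alpha\beta\gamma=1$ (Theorem~\ref{teor:adi-uni-kron}). Boundedness is obtained not from an exact combinatorial formula but from the \emph{upper bound}
\[
\coefili{\la+n\alpha}{\mu+n\beta}{\nu+n\gamma}\ \le\ \bili{\permu{\la+n\alpha}\otimes\permu{\mu+n\beta}}{\cara{\nu+n\gamma}}
\ =\ \sum_{X\in\dmatli{\la+n\alpha}{\mu+n\beta}} K_{\nu+n\gamma,\,\parti X},
\]
and then by showing that this sum is eventually constant. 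That last step is where additivity really enters, via the geometric characterization $\permuta\gamma\cap\fimap{\trapo}=\{\fimap A\}$ (Theorem~\ref{teor:adi-pit}): it forces the set of $X$ contributing nonzero Kostka numbers to be, for large $n$, exactly $\{X_j+nA\}$ for a fixed finite list $X_1,\dots,X_l$ (Proposition~\ref{prop:pt-acota} and Remark~\ref{obse:pt-acota}), and each individual Kostka number $K_{\nu+n\gamma,\,\rho(j)+n\gamma}$ stabilizes (Proposition~\ref{prop:k-acota}). So the ``add $A$'' map you envisioned does appear, but it acts on the matrices indexing an \emph{upper bound}, not on a set whose cardinality equals the Kronecker coefficient itself.
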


The next three results are new instances of stability.
The first two follow from the additivity of matrices $B$ and $C$
in Example~\ref{ejems:adi-tres}; the third from the additivity of the matrix $A$
in Example~\ref{ejem:young}.

\begin{coro} \label{coro:estab-tripode}
Let $a$, $b$, $c$ be nonnegative integers.
Then, for any triple of partitions $\la$, $\mu$, $\nu$ of the same size,
there is an $L\in \natural$, such that for all $n\ge L$,
\begin{multline*}
\coefili {\la + n(b+c+1, 1^a)}{\mu + n(a+c+1, 1^b)}{\nu + n(c+1, 1^{a+b})} \\
= \coefili {\la + L(b+c+1, 1^a)}{\mu + L(a+c+1, 1^b)}{\nu + L(c+1, 1^{a+b})}.
\end{multline*}
\end{coro}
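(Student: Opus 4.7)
The approach is to reduce the corollary to a single instance of the Additive Stability Theorem, applied to the triple of partitions $\alpha = (b+c+1,\, 1^a)$, $\beta = (a+c+1,\, 1^b)$, $\gamma = (c+1,\, 1^{a+b})$. All three are partitions of the common integer $a+b+c+1$, so by Theorem~\ref{teor:m} it is enough to exhibit one matrix $B \in \dmatli{\alpha}{\beta}$ that is additive and satisfies $\parti{B} = \gamma$; the conclusion of the corollary then follows immediately from the conclusion of the theorem.

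The natural candidate, suggested by the shapes of $\alpha$, $\beta$, $\gamma$, is the $(a+1) \times (b+1)$ \emph{hook} (or tripod) matrix whose $(1,1)$ entry is $c+1$, whose remaining entries in the first row and in the first column are all equal to $1$, and whose other entries are zero. A direct inspection confirms that the row-sum vector is $(b+c+1,1,\dots,1) = \alpha$, the column-sum vector is $(a+c+1,1,\dots,1) = \beta$, and the multiset of entries, read in weakly decreasing order, is $(c+1,1^{a+b}) = \gamma$. To see that $B$ is additive, I would take $x_1 = y_1 = 1$ and $x_i = y_j = 0$ for $i,j \ge 2$. Then $x_i+y_j$ takes the values $2,\,1,\,1,\,0$ precisely at the positions where $B$ has entries $c+1,\,1,\,1,\,0$ respectively, so the implication
\[
b_{i,j} > b_{k,l} \implica x_i + y_j > x_k + y_l
\]
holds in every case (even in the degenerate case $c=0$, where $c+1$ and $1$ coincide and no strict comparison between the corner and the arms is forced).

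With $B$ in hand, the Additive Stability Theorem applied to $\alpha,\beta,\gamma$ and to the arbitrary triple $\la,\mu,\nu$ of partitions of the same size yields that the sequence $\{\coefili{\la+n\alpha}{\mu+n\beta}{\nu+n\gamma}\}_{n\in\noneg}$ is weakly increasing and bounded from above, hence eventually constant; the first index at which it stabilizes is the desired $L$. The main, and essentially only, substantive step is guessing the right additive matrix; once the hook matrix is written down, the verifications of the row/column sums, of $\parti{B}=\gamma$, and of additivity are all routine, and the stability claim reduces to a direct citation of Theorem~\ref{teor:m}.
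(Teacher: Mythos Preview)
Your proof is correct and follows exactly the paper's approach: the paper states that Corollary~\ref{coro:estab-tripode} follows from the additivity of the tripod matrix $B$ in Example~\ref{ejems:adi-tres}, which is precisely the hook matrix you construct, and then applies Theorem~\ref{teor:m}. Your explicit choice $x_1=y_1=1$, $x_i=y_j=0$ for $i,j\ge 2$ fills in the detail that the paper leaves as ``easy to prove''.
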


\begin{coro} \label{coro:estab-piramide}
For any $k\in \natural$, let $\eta^k = (\binom{k+1}{2}, \dots, \binom{2}{2})$
and $\delta^k = (k, (k-1)^2, \dots, 1^k)$.
Then, for any triple of partitions $\la$, $\mu$, $\nu$ of the same size,
there is an $L\in \natural$, such that for all $n\ge L$,
\begin{equation*}
\coefili {\la + n \eta^k} {\mu + n \eta^k} {\nu + n \delta^k} =
\coefili {\la + L \eta^k} {\mu + L \eta^k} {\nu + L \delta^k}.
\end{equation*}
\end{coro}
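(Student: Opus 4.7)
The plan is to apply the Main Theorem (Additive stability) and reduce the problem to exhibiting, for each $k \in \natural$, a single additive matrix $A \in \dmatli{\eta^k}{\eta^k}$ with $\parti A = \delta^k$. The natural candidate is the $k \times k$ ``staircase pyramid'' $A = (a_{i,j})$ defined by $a_{i,j} = \max(0,\, k+2-i-j)$, which is supported on the triangle $i + j \le k+1$ and decreases by $1$ as $i+j$ grows.

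First I would verify the marginals. Row $i$ has nonzero entries exactly at columns $j = 1, \ldots, k+1-i$, with values $k+1-i, k-i, \ldots, 1$, so the row sum is $\binom{k+2-i}{2}$, which is the $i$-th part of $\eta^k$. The symmetry $a_{i,j} = a_{j,i}$ forces the column sums to coincide with the row sums, so $A \in \dmatli{\eta^k}{\eta^k}$.

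Next I would compute $\parti A$. A value $v \in \{1, \ldots, k\}$ is attained precisely at the $(i,j)$ with $i + j = k+2-v$ and $i, j \ge 1$, so it occurs $k+1-v$ times. Hence $k$ occurs once, $k-1$ occurs twice, $\ldots$, $1$ occurs $k$ times, yielding $\parti A = (k, (k-1)^2, \ldots, 1^k) = \delta^k$.

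For additivity I would take $x_i = -i$ and $y_j = -j$, so $x_i + y_j = -(i+j)$. Since $a_{i,j}$ on its support is a strictly decreasing function of $i+j$, the required implication $a_{i,j} > a_{i',j'} \implica x_i + y_j > x_{i'} + y_{j'}$ amounts to showing that $a_{i,j} > a_{i',j'}$ forces $i+j < i'+j'$. If both entries are positive this is direct from $a_{i,j} = k+2-i-j$; if $a_{i,j} > 0$ and $a_{i',j'} = 0$ then $i+j \le k+1 < i'+j'$; and if both entries are zero the hypothesis is vacuous. Hence $A$ is additive and the Main Theorem delivers the stabilization. The only real obstacle is guessing the right matrix; once the staircase pyramid is written down, the three verifications are elementary.
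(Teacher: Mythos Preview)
Your proposal is correct and follows exactly the paper's approach: the paper derives this corollary from the additivity of the matrix $C$ in Example~\ref{ejems:adi-tres}, which is precisely your staircase pyramid $a_{i,j}=\max(0,k+2-i-j)$, and then invokes the main theorem. Your additivity witnesses $x_i=-i$, $y_j=-j$ differ from the paper's choice $(k-1,k-2,\dots,0)$ only by an irrelevant additive constant, and your verifications of the marginals and $\pi$-sequence are the routine checks the paper leaves implicit.
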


\begin{coro} \label{coro:adi-young}
Let $\alpha$ be a partition.
Then, for any triple of partitions $\la$, $\mu$, $\nu$ of the same size,
there is an $L\in \natural$, such that for all $n\ge L$,
\begin{equation*}
\coefili {\la + n\alpha}{\beta + n \alpha^\prime}{\nu + (n^{|\alpha|})} =
\coefili {\la + L\alpha}{\beta + L \alpha^\prime}{\nu + (L^{|\alpha|})}.
\footnote{This Corollary also follows from Example~6.3(b) in~\cite{stem1}.}
\end{equation*}
\end{coro}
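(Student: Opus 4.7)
The plan is to deduce the corollary directly from the Main Theorem (Additive Stability), with the role of the triple $(\alpha,\beta,\gamma)$ in that theorem played by $(\alpha,\alpha^\prime,(1^{|\alpha|}))$. For this it suffices to exhibit an additive matrix $A\in\dmatli{\alpha}{\alpha^\prime}$ whose entry sequence $\parti{A}$ equals $(1^{|\alpha|})$, and then read off the conclusion of the Main Theorem, using that $n(1^{|\alpha|})=(n^{|\alpha|})$.

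The natural candidate is the characteristic matrix of the Young diagram of $\alpha$: take $A$ of size $\longi{\alpha}\times\alpha_1$ defined by $a_{i,j}=1$ if $j\le\alpha_i$ and $a_{i,j}=0$ otherwise. First I would check the three combinatorial requirements. Row $i$ has exactly $\alpha_i$ ones, so $\renglon{A}=\alpha$. Column $j$ has exactly $\#\{i:\alpha_i\ge j\}=\alpha^\prime_j$ ones, so $\columna{A}=\alpha^\prime$. And since every entry is $0$ or $1$ with a total of $|\alpha|$ ones, $\parti{A}=(1^{|\alpha|})$, as needed.

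The heart of the argument, and what I expect to be the only nontrivial point, is verifying that $A$ is additive. Here the naive choice $x_i=-i$, $y_j=-j$ fails (a long first row together with a short second row produces a violation). The right choice is to set
\begin{equation*}
x_i=\alpha_i \qquad\text{and}\qquad y_j=-j.
\end{equation*}
Indeed, $a_{i,j}>a_{k,l}$ forces $a_{i,j}=1$ and $a_{k,l}=0$, which means $j\le\alpha_i$ and $l>\alpha_k$; hence
\begin{equation*}
x_i+y_j=\alpha_i-j\ge 0>-1\ge\alpha_k-l=x_k+y_l,
\end{equation*}
so the additivity inequality holds.

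With $A$ shown to be additive, the Main Theorem applied to $(\alpha,\alpha^\prime,(1^{|\alpha|}))$ yields, for any triple $\la,\mu,\nu$ of partitions of the same size, an $L\in\natural$ such that for all $n\ge L$
\begin{equation*}
\coefili{\la+n\alpha}{\mu+n\alpha^\prime}{\nu+(n^{|\alpha|})}=\coefili{\la+L\alpha}{\mu+L\alpha^\prime}{\nu+(L^{|\alpha|})},
\end{equation*}
which is precisely the statement of Corollary~\ref{coro:adi-young} (the symbol $\beta$ appearing in the middle slot of the displayed formula is a typo for $\mu$). No further work is required beyond invoking the Main Theorem.
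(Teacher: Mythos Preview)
Your proposal is correct and follows exactly the approach indicated in the paper: the corollary is deduced from Theorem~\ref{teor:m} via the additivity of the binary matrix $A\in\dmatli{\alpha}{\alpha^\prime}$ of Example~\ref{ejem:young}, and you rightly note the typo $\beta\leftrightarrow\mu$. The only cosmetic difference is the choice of witnesses for additivity: the paper takes $x_i=\alpha_i$, $y_j=\alpha^\prime_j$, whereas you take $x_i=\alpha_i$, $y_j=-j$; both choices work (yours is arguably tidier), and the overall argument is the same.
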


Stembridge's hypothesis~\eqref{ecua:stem} and additivity are closely related.
In Section~\ref{sec:equiv} we show that additivity implies condition~\eqref{ecua:stem},
and that this condition implies the existence of certain unique additive matrix.
The advantage of our hypothesis is that it is far easier to find
examples of additive matrices, than to find examples of partitions
$\alpha$, $\beta$, $\gamma$ satisfying~\eqref{ecua:stem}
(even with help of identity~\eqref{ecua:permu-mat}),
without knowing the relation between them.
Therefore, it is much easier to find new examples of stability.
Instances this are given in Corollaries~\ref{coro:estab-tripode} and~\ref{coro:estab-piramide}.
Moreover, to decide if a matrix is additive can be done in polynomial
time~\cite{onva} (see also Paragraph~\ref{para:comple}).
Another advantage is that there is already a body of results concerning
additive matrices~\cite{dupe, flrs, fishe, onva, sava, vapir, aditivo, elenot, libro}.

The paper is organized as follows.
Section~\ref{sec:cara} contains notation and several known results from
the representation theory of the symmetric group needed in this work.
Section~\ref{sec:disctom} contains a summary of all results about uniqueness
and additivity required for this paper, as well as examples and one application
to Kronecker coefficients needed in the proof of additive stability.
In Section~\ref{sec:geomadi} we present a geometric characterization of
additivity from~\cite{onva}.
This will be fundamental in the proof of the main theorem and also in
Section~\ref{sec:equiv}, where we study the relation between additivity
and condition~\eqref{ecua:stem}.
In this section we also present a new characterization of additivity
(Theorem~\ref{teor:adi-n-uni}).
Section~\ref{sec:prueba} contains the proof of Theorem~\ref{teor:m}.
Finally, in section~\ref{sec:aplica}, we relate additive stability
to the previous known cases of stability of Kronecker coefficients.

\section{Representation theory of the symmetric group} \label{sec:cara} \hfill

We assume the reader is familiar with the standard results in the
representation theory of the symmetric group (see for example~\cite{jake, mac, sag, stan}).
In this section we review some notation, definitions and results used in this paper.

We will use the following notation:
$\natural$ is the set of positive integers, $\noneg =\natural \cup \{0\}$, and,
for any $n\in \noneg$,  $\nume n = \{1, \dots, n\}$, so that $\nume 0 = \varnothing$.
If $\la = \vector \la p$ is a partition (a weakly decreasing sequence of positive integers),
we denote its \emph{size} $\sum_{i\in \nume p} \la_i$ by
$|\la |$ and its \emph{length} $p$ by $\longi{\la}$.
If $|\la|=n$, we also write $\la\vdash n$.
The notation $\la = (1^{a_1}, 2^{a_2}, \dots )$ means that $\la$ has $a_i$ parts
equal to $i$.
We denote by $\la^\prime$ the partition \emph{conjugate} to $\la$, obtained by transposing
the diagram of $\la$.
The {\em depth} of $\la$ is $\prof\la = |\la| - \la_1$.
If $\la$ and $\mu$ are two partitions of length $p$ and $q$, respectively,
and $p \le q$, we denote $\la_{p+1} = \cdots = \la_q = 0$ and define its \emph{sum} by
\begin{equation*}
\la + \mu = (\la_1 + \mu_1, \dots, \la_q + \mu_q).
\end{equation*}

Given two partitions $\la$, $\mu$ of $n$ we write $\la \mayori \mu$
to indicate that $\la$ is greater than or equal to $\mu$ in the dominance
order of partitions.
We write $\la \mayore \mu$, if $\la \mayori \mu$ and $\la \neq \mu$.

The number of semistandard Young tableaux of shape $\la$ and
content $\nu$ is denoted by $\kos\la\nu$.
It is well-known and not difficult to prove that
\begin{equation} \label{ecua:kos-pos}
\kos\la\nu >0  \sii \la \mayori \nu.
\end{equation}

For any partition $\la \vdash n$, we denote by $\cara\la$ the irreducible character
of $\sime n$ associated to $\la$, and, for any partition $\nu = \vector \nu r$ of $n$,
by $\permu\nu = {\rm Ind}_{\sime \nu}^{\sime n}( 1_{\nu})$ the
permutation character associated to $\nu$.
That is, $\permu\nu$ is the character induced from the trivial
character of the Young subgroup
$\sime \nu = \sime{\nu_1} \times \cdots \times \sime{\nu_r}$ to $\sime n$.

The sets $\{ \cara\la \mid \la \vdash n \}$ and $\{ \permu\nu \mid \nu \vdash n \}$
are additive basis of the character ring of $\sime n$.
Both basis are related by Young's rule

\begin{equation} \label{ecua:young}
\permu\nu = \sum_{\la \mayori \nu} \kos\la\nu \cara\la.
\end{equation}

The following identity is well-known and will be very useful in this paper.
It can be found in~\cite[Prop.~2]{dfr} or in~\cite[2.9.16]{jake}.

\begin{equation} \label{ecua:permu-tensor}
\permu\la \otimes \permu\mu = \sum_{A \in \dmat} \permu{\parti A}.
\end{equation}

Due to the well-known identity
\begin{equation*}
\coefi = \frac{1}{n!} \sum_{\sigma\in \sime n} \cara\la(\sigma) \cara\mu(\sigma) \cara\nu(\sigma),
\end{equation*}
the Kronecker coefficient $\coefi$ is symmetric in $\la$, $\mu$ and $\nu$.
We will use this property in Section~\ref{sec:aplica}.

Combining identities~\eqref{ecua:young} and~\eqref{ecua:permu-tensor}, we get

\begin{equation} \label{ecua:permu-mat}
\bili{\permu \la \otimes \permu \mu}{\cara \nu} =
\sum_{A \in \dmat} K_{\nu, \parti A}.
\end{equation}

The next theorem is due to Manivel.

\begin{teor} \label{teor:kron-mani}
{\em \cite[p.~157]{mani}}
Let $\alpha$, $\beta$, $\gamma$, $\la$, $\mu$, $\nu$ be partitions such
that $|\alpha| = |\beta| = |\gamma|$ and $|\la| = |\mu| = |\nu|$.
If $\coefili \alpha\beta\gamma$ and $\coefi$ are non-zero, then
\begin{equation*}
\coefili{\la + \alpha}{\mu + \beta}{\nu + \gamma} \ge \max\{ \coefili \alpha\beta\gamma,
\coefi \}.
\end{equation*}
\end{teor}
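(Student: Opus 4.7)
The plan is to prove Manivel's inequality using Cartan multiplication in Schur functors of a tensor product. Fix vector spaces $V_1$, $V_2$ of dimensions large enough to accommodate the partitions involved, and set $W=V_1\otimes V_2$. For any partition $\rho$, the Schur module $\mathbb{S}^\rho(W)$ decomposes as a $\mathrm{GL}(V_1)\times \mathrm{GL}(V_2)$-module as
\begin{equation*}
\mathbb{S}^\rho(W) \;=\; \bigoplus_{\sigma,\tau}\, {\sf g}(\sigma,\tau,\rho)\,\mathbb{S}^\sigma(V_1)\otimes \mathbb{S}^\tau(V_2),
\end{equation*}
so that ${\sf g}(\sigma,\tau,\rho)$ equals the dimension of the space $H_\rho(\sigma,\tau)$ of vectors in $\mathbb{S}^\rho(W)$ that are annihilated by the maximal unipotent $U_1\times U_2$ of $\mathrm{GL}(V_1)\times \mathrm{GL}(V_2)$ and have torus weight $(\sigma,\tau)$.

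Next I would exploit the Cartan surjection
\begin{equation*}
c_{\nu,\gamma}\colon \mathbb{S}^\nu(W)\otimes \mathbb{S}^\gamma(W) \twoheadrightarrow \mathbb{S}^{\nu+\gamma}(W),
\end{equation*}
defined as the projection onto the unique summand of highest $\mathrm{GL}(W)$-weight $\nu+\gamma$ on the left-hand side. This map is $\mathrm{GL}(W)$-equivariant, hence in particular $(\mathrm{GL}(V_1)\times \mathrm{GL}(V_2))$-equivariant, so it carries the tensor of a weight-$(\lambda,\mu)$ highest weight vector by a weight-$(\alpha,\beta)$ highest weight vector into a weight-$(\lambda+\alpha,\mu+\beta)$ highest weight vector. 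In other words, $c_{\nu,\gamma}$ restricts to a bilinear pairing
\begin{equation*}
H_\nu(\lambda,\mu)\otimes H_\gamma(\alpha,\beta) \;\longrightarrow\; H_{\nu+\gamma}(\lambda+\alpha,\mu+\beta).
\end{equation*}

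The crucial step is then to observe that when the Cartan maps are assembled over all $\rho$ one obtains, up to duality, the coordinate ring of the base affine space $\mathrm{GL}(W)/U(W)$, which is an integral domain (in fact a unique factorization domain by a theorem of Popov). Once a nonzero $w\in H_\gamma(\alpha,\beta)$ is chosen (possible because ${\sf g}(\alpha,\beta,\gamma)>0$), Cartan multiplication by $w$ therefore gives an injection $H_\nu(\lambda,\mu)\hookrightarrow H_{\nu+\gamma}(\lambda+\alpha,\mu+\beta)$, proving
\begin{equation*}
{\sf g}(\lambda+\alpha,\mu+\beta,\nu+\gamma)\;\ge\;{\sf g}(\lambda,\mu,\nu).
\end{equation*}
Interchanging the roles of the two triples yields the companion inequality with ${\sf g}(\alpha,\beta,\gamma)$ on the right, and the maximum of the two bounds is the stated inequality.

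The main obstacle is the injectivity of multiplication by $w$: without it one only obtains the semigroup-type conclusion ${\sf g}(\lambda+\alpha,\mu+\beta,\nu+\gamma)>0$, which is strictly weaker than Manivel's quantitative bound. Injectivity rests on the domain property of $\mathbb{C}[\mathrm{GL}(W)]^{U(W)}$, which is the one nontrivial algebro-geometric input; everything else is equivariance bookkeeping under the surjection $c_{\nu,\gamma}$.
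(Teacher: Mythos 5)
The paper itself gives no proof of this theorem---it is quoted from Manivel \cite[p.~157]{mani}---so the only comparison available is with that cited source, and your argument is correct and is essentially Manivel's: realize ${\sf g}(\sigma,\tau,\rho)$ as the dimension of the $(\sigma,\tau)$-highest-weight space of $\mathbb{S}^\rho(V_1\otimes V_2)$, and get injectivity of Cartan multiplication by a fixed nonzero highest-weight vector from the fact that $\bigoplus_\rho \mathbb{S}^\rho(W)$, the (polynomial part of the) coordinate ring of $\mathrm{GL}(W)/U(W)$ with the Cartan product, is an integral domain. One minor remark: the unique-factorization input you attribute to Popov is not needed, since being a domain already makes multiplication by a nonzero element injective on each graded piece, and the equivariance bookkeeping you describe correctly places the image of $H_\nu(\lambda,\mu)\cdot w$ inside $H_{\nu+\gamma}(\lambda+\alpha,\mu+\beta)$, with the two hypotheses ${\sf g}(\alpha,\beta,\gamma)>0$ and ${\sf g}(\lambda,\mu,\nu)>0$ used symmetrically to obtain both bounds in the maximum.
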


\section{Discrete tomography} \label{sec:disctom}
\hfill

In this section we gather several results from discrete tomography that
are used to prove additive stability.
Applications of discrete tomography to Kronecker coefficients have
already been given before in~\cite{vapp, avva}.
Two notions from discrete tomography are relevant for this paper: uniqueness
and additivity of 3-dimensional binary matrices (see~\cite{fishe, libro}).
They are related to the notions of minimality, $\pi$-uniqueness~\cite{tova,elenot}
and additivity~\cite{aditivo, onva, elenot}, which are defined for matrices with nonnegative
integer entries.
Below we explain these concepts, the relations among them, give examples
and show how they are related to Kronecker coefficients.

\begin{intmat}
Let $A=(a_{i,j})$ be a matrix of size $p \times q$.
We associate to $A$ two compositions and one partition.
For each $i\in \nume p$ and each $j\in \nume q$, one defines
\begin{equation*}
\alpha_i = \sum_{y \in \nume q} a_{i,y} \quad \text{and} \quad
\beta_j = \sum_{x \in \nume p} a_{x,j}.
\end{equation*}
Then, $\alpha = \vector \alpha p$ is called the \emph{row-sum} vector and
$\beta = \vector \beta q$ the \emph{column-sum} vector of $A$.
We denote by $\parti A$ the vector of entries of $A$ arranged in weakly
decreasing order.
Then, $\parti A$ is a partition called the \emph{$\pi$-sequence} of $A$.
We say that $A$ is a \emph{plane partition} if it has nonnegative integer
entries and its rows and columns are weakly decreasing.
For the applications we have in mind we assume, \textbf{from now on},
without loss of generality, that the row-sum and column-sum vectors are
weakly decreasing.
Otherwise we just permute rows and columns.

We denote by $\dmata$ the set of all matrices $A=(a_{i,j})$ with nonnegative
integer entries, row-sum vector $\alpha$ and column-sum vector $\beta$.
If $\gamma$ is a partition, we denote by $\dmatapi$ the set of all matrices
in $\dmata$ with $\pi$-sequence $\gamma$.
\end{intmat}

\begin{binmat}
Let $A=(a_{i,j,k})$ be a matrix of size $p \times q \times r$ with
entries in $\{0, 1\}$ (we call it a binary matrix for short).
We associate to $A$ three compositions.
For each $i \in \nume p$, $j\in \nume q$ and $k\in \nume r$, one defines
\begin{equation*}
\alpha_i = \sum_{(y,z) \in \nume q \times \nume r} a_{i,y,z},\quad
\beta_j = \sum_{(x,z) \in \nume p \times \nume r} a_{x,j,z} \quad
\textrm{and} \quad
\gamma_k = \sum_{(x,y)\in \nume p \times \nume q} a_{x,y,k}.
\end{equation*}
Then, the compositions $\alpha = \vector \alpha p$, $\beta = \vector \beta q$,
$\gamma = \vector \gamma r$ are called the \emph{1-marginals} of $A$.
See, for example~\cite{delki}, where 3-dimensional binary matrices
are called three-way statistical tables.
For the applications we have in mind we assume, \textbf{from now on},
without loss of generality, that the 1-marginals are weakly decreasing.
Otherwise we just permute 2-dimensional slices.

We denote by $\tmata$ the set of all 3-dimensional binary matrices
with 1-marginals $\alpha$, $\beta$, $\gamma$,
and by $\tmatanum$ its cardinality.
A matrix $X\in \tmata$ is called a \emph{matrix of uniqueness}~\cite{flrs},
if $\tmatanum = 1$.
\end{binmat}

\begin{morintmat}
Let $\alpha$, $\beta$ be partitions of the same size and let
$A = (a_{i,j}) \in \dmata$.
We say that $A$ is \emph{minimal}~\cite{tova} if there is no matrix $B\in \dmata$
such that $\parti B \menore \parti A$, and
we say that $A$ is \emph{$\pi$-unique} if there is no other matrix $B\in \dmata$
such that $\parti B = \parti A$.
Suppose $A$ has size $p\times q$, and let $r$ be the maximum of the entries of $A$.
The \emph{graph} of $A$ is the 3-dimensional binary matrix
$\grafi A = (a_{i,j,k})$ defined, for all
$(i,j,k) \in \nume p \times \nume q \times \nume r$, by
\begin{equation*}
a_{i,j,k} =
\begin{cases}
1 & \text{if $1 \le k \le a_{i,j}$;} \\
0 & \text{otherwise.}
\end{cases}
\end{equation*}
Note that, if $A\in \dmatapi$, then $\grafi A \in \tmatac$.
Therefore,
\begin{equation*}
\fun {\sf G} {\dmatapi}{\tmatac}
\end{equation*}
is a well-defined injective map.
If $X$ is the image $\grafi A$ of a plane partition $A$, then $X$ is called
the \emph{diagram} of $A$~\cite{stanspp} or \emph{pyramid}~\cite{vapir}.
\end{morintmat}

The next theorem relates the property of uniqueness for binary matrices
to properties of integral matrices.

\begin{teor} \label{teor:tova}
\emph{\cite[Thm.~1]{tova}}
Let $\alpha$, $\beta$, $\gamma$ be partitions.
Then $\tmatacnum = 1$ if and only if there is a matrix $A\in \dmatapi$ that is
minimal and $\pi$-unique.
Moreover, if $A \in \dmata$ is minimal and $\pi$-unique, then $A$ is a plane partition.
\end{teor}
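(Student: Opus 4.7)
The plan is to mediate between $\tmatac$ and $\dmata$ via the ``shadow'' $M(Y):=\bigl(\sum_k Y_{i,j,k}\bigr)_{i,j}$ and its one-sided inverse $A\mapsto\grafi A$, with the key quantitative tool being the inequality
\begin{equation*}
\sum_{k\le K}\gamma'_k \;=\;\sum_{(i,j)}\#\{k\le K:Y_{i,j,k}=1\}\;\le\;\sum_{(i,j)}\min\bigl(M(Y)_{i,j},K\bigr)\;=\;\sum_{k\le K}\parti{M(Y)}'_k,
\end{equation*}
valid for every $Y\in\tmatac$ and every $K\ge 1$. This rewrites as $\parti{M(Y)}\dominada\gamma$, with equality for every $K$ if and only if $Y=\grafi{M(Y)}$ (i.e.\ the fibre of $Y$ over each $(i,j)$ is bottom-heavy).

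For $(\Leftarrow)$, given a minimal and $\pi$-unique $A\in\dmatapi$, I would first note that $\grafi A\in\tmatac$, so $\tmatac\ne\varnothing$. For any $Y\in\tmatac$, I apply the inequality: minimality of $A$ forces $\parti{M(Y)}=\gamma$, $\pi$-uniqueness then forces $M(Y)=A$, and the equality case forces $Y=\grafi A$. Hence $\tmatacnum=1$.

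For $(\Rightarrow)$, assume $\tmatac=\{X\}$. First, $X$ must be a graph: otherwise some $(i,j)$ and levels $k<k'$ satisfy $X_{i,j,k}=0$, $X_{i,j,k'}=1$, and writing $S_\ell:=\{(i,j):X_{i,j,\ell}=1\}$ one has $|S_k|=\gamma'_k\ge\gamma'_{k'}=|S_{k'}|$ with $(i,j)\in S_{k'}\setminus S_k$, so $|S_k\setminus S_{k'}|\ge 1$, and a four-corner swap with any $(i',j')\in S_k\setminus S_{k'}$ produces a second element of $\tmatac$. Thus $X=\grafi A$ for $A:=M(X)\in\dmatapi$. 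The $\pi$-uniqueness of $A$ is immediate since any $B\in\dmata$ with $\parti B=\gamma$ yields $\grafi B\in\tmatac=\{\grafi A\}$. For minimality, suppose for contradiction that some $B\in\dmata$ has $\parti B\menore\gamma$, so $\grafi B$ has 1-marginals $(\alpha,\beta,\parti B')$ with $\parti B'\mayore\gamma'$. I would iterate \emph{upward shifts} --- at each stage pick levels $k<k'$ with the current third marginal $\delta$ satisfying $\delta_k>\gamma'_k$ and $\delta_{k'}<\gamma'_{k'}$ (which forces $\delta_k>\delta_{k'}$), use the pigeonhole estimate $|\{(i,j):Y_{i,j,k}=1,Y_{i,j,k'}=0\}|\ge\delta_k-\delta_{k'}>0$ to find a valid position, and move one $1$ from $(i,j,k)$ to $(i,j,k')$. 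Each shift preserves $\alpha$, $\beta$, and the shadow $M$, and the process terminates at some $Y\in\tmatac$ with $M(Y)=B\ne A$, contradicting $\tmatacnum=1$.

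For the ``moreover'' clause, if $A\in\dmata$ were minimal, $\pi$-unique but not a plane partition, then since $\alpha,\beta$ are weakly decreasing one could locate (in the row case) $i<i'$ and adjacent columns $j,j+1$ with $a_{i,j}<a_{i,j+1}$ and $a_{i',j}>a_{i',j+1}$; the ``Robin Hood'' swap adding $(+1,-1,-1,+1)$ at those four positions produces $A'\in\dmata$ with $\parti{A'}\dominada\parti A$, which contradicts either minimality (strict inequality) or $\pi$-uniqueness (equality). The main obstacle I foresee is the termination of the upward-shift procedure: one must exhibit a strictly decreasing monotone potential, for which $\Phi(\delta):=\sum_k\max(0,\delta_k-\gamma'_k)$ is the natural candidate and can be checked to drop by $1$ at each shift.
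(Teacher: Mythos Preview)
The paper does not prove this theorem; it is quoted verbatim from \cite{tova}, whose original proof used character theory of the symmetric group (the paper itself remarks that a purely combinatorial proof appears in \cite{bru}, and related arguments in \cite{elenot,libro}). Your approach is the combinatorial one, built on the shadow map $M$ and the graph map ${\sf G}$ together with the key majorization inequality $\parti{M(Y)}\preccurlyeq\gamma$; this is the right framework and is essentially the route taken in the later combinatorial treatments.

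Your argument for $(\Leftarrow)$, for the ``$X$ must be a graph'' step, for $\pi$-uniqueness, and for the ``moreover'' clause are all correct (in the last, the condition $i<i'$ is neither guaranteed nor needed; the Robin Hood swap works regardless of the relative order of $i$ and $i'$, and one sees $\parti{A'}\preccurlyeq\parti A$ by factoring the four-entry move as two successive transfers from a strictly larger entry to a strictly smaller one).

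There is, however, a genuine gap in the minimality step of $(\Rightarrow)$. Your potential $\Phi(\delta)=\sum_k\max(0,\delta_k-\gamma'_k)$ does drop by $1$ whenever a valid shift exists, so it handles termination; but it does \emph{not} guarantee that a pair $k<k'$ with $\delta_k>\gamma'_k$ and $\delta_{k'}<\gamma'_{k'}$ exists at every stage. A careless sequence of shifts can destroy the prefix-sum dominance $\sum_{m\le K}\delta_m\ge\sum_{m\le K}\gamma'_m$ and leave you with every excess index lying to the right of every deficit index (e.g.\ from $\gamma'=(2,2,2)$, $\delta=(4,1,1)$, two shifts $1\to 3$ reach $\delta=(2,1,3)$, where no valid $k<k'$ exists). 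The fix is to maintain that dominance as an explicit invariant: for instance, always take $k'$ to be the smallest deficit index and $k$ the largest excess index below $k'$; one then checks that all intermediate prefix sums stay at least those of $\gamma'$ after the shift. Alternatively, bypass the iterative procedure entirely by observing that finding $Y\in\tmatac$ with $M(Y)=B$ is exactly a bipartite degree realization problem with degree sequences $\parti B$ and $\gamma'$, whose solvability is the Gale--Ryser condition $\parti B\preccurlyeq\gamma$, which holds by hypothesis.
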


The theorem can be rephrased in the following way:
Let $A\in \dmatapi$.
Then, $\grafi A$ is a matrix of uniqueness if and only if $A$ is minimal
and $\pi$-unique.
Moreover, if $\grafi A$ is a matrix of uniqueness, then $A$ is a plane partition.

To the best of our knowledge, the first application of uniqueness to
Kronecker coefficients is Theorem~1.1 in~\cite{vapp}.
Here, we need only a particular case.

\begin{teor} \label{teor:uni-kron}
{\em \cite[Cor.~4.2]{vapp}}
Let $A\in \dmatapi$ be a matrix that is minimal and $\pi$-unique.
Then,\footnote{Compare Theorems~\ref{teor:tova} and~\ref{teor:uni-kron} here with
Theorem~6.4 in~\cite{stem1}.}
\begin{equation*}
\coefili \alpha \beta\gamma = 1.
\end{equation*}
\end{teor}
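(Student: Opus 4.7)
The plan is to evaluate $\bili{\permu\alpha \otimes \permu\beta}{\cara\gamma}$ via identity~\eqref{ecua:permu-mat} and then extract $\coefili\alpha\beta\gamma$ by Young's rule. First, I apply~\eqref{ecua:permu-mat}:
\begin{equation*}
\bili{\permu\alpha \otimes \permu\beta}{\cara\gamma} = \sum_{A' \in \dmata} K_{\gamma,\, \parti{A'}}.
\end{equation*}
I claim the right-hand side collapses to $1$. The term $A' = A$ contributes $K_{\gamma, \gamma} = 1$. For $A' \neq A$, the $\pi$-uniqueness of $A$ forces $\parti{A'} \neq \gamma$, and minimality of $A$ forces $\parti{A'} \not\menore \gamma$ in the dominance order; together these give $\gamma \not\mayori \parti{A'}$, so by the Kostka positivity criterion~\eqref{ecua:kos-pos}, $K_{\gamma, \parti{A'}} = 0$. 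Hence $\bili{\permu\alpha \otimes \permu\beta}{\cara\gamma} = 1$.

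Next, I expand both $\permu\alpha$ and $\permu\beta$ via Young's rule~\eqref{ecua:young}, which yields
\begin{equation*}
1 \;=\; \bili{\permu\alpha \otimes \permu\beta}{\cara\gamma} \;=\; \sum_{\la \mayori \alpha,\, \mu \mayori \beta} K_{\la,\alpha}\, K_{\mu,\beta}\, \coefili\la\mu\gamma.
\end{equation*}
Every summand is a nonnegative integer. Since $K_{\alpha,\alpha} = K_{\beta,\beta} = 1$, the $(\la,\mu) = (\alpha,\beta)$ summand is precisely $\coefili\alpha\beta\gamma$, and so $\coefili\alpha\beta\gamma \le 1$.

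The hard part will be excluding $\coefili\alpha\beta\gamma = 0$. My plan is to appeal to the construction in~\cite{vapp}: under the equivalent hypothesis $\tmatacnum = 1$ provided by Theorem~\ref{teor:tova}, an explicit nonzero intertwiner (or combinatorial witness) for $\coefili\alpha\beta\gamma$ is produced from the unique $3$-dimensional binary matrix $\grafi A \in \tmatac$. An alternative route is to show directly that every summand with $(\la, \mu) \neq (\alpha, \beta)$ in the display above vanishes; this would require a finer structural argument ruling out $\coefili\la\mu\gamma \neq 0$ whenever $\la \mayore \alpha$ or $\mu \mayore \beta$, exploiting the minimality and $\pi$-uniqueness of $A$. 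Combined with the upper bound $\coefili\alpha\beta\gamma \le 1$ already obtained, either route yields the asserted equality $\coefili\alpha\beta\gamma = 1$.
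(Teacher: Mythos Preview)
Your upper-bound argument is correct and clean: minimality and $\pi$-uniqueness of $A$, together with~\eqref{ecua:permu-mat} and~\eqref{ecua:kos-pos}, do give $\bili{\permu\alpha \otimes \permu\beta}{\cara\gamma} = 1$, and expanding via Young's rule then yields $\coefili\alpha\beta\gamma \le 1$. (Note that the present paper does not supply its own proof of this theorem; it simply quotes it from~\cite{vapp}, so there is no in-paper argument to compare against line by line.)

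However, your proposal is incomplete exactly where you say it is, and neither route you sketch for the lower bound is yet a proof. Route~(a), ``appeal to the construction in~\cite{vapp},'' is circular: the statement you are trying to establish \emph{is} Corollary~4.2 of~\cite{vapp}, so invoking that paper's machinery is just restating the citation rather than proving anything. Route~(b), ruling out $\coefili\la\mu\gamma > 0$ for every pair $(\la,\mu)$ with $\la \mayore \alpha$ or $\mu \mayore \beta$, is left as a bare wish; nothing you have established about matrices in $\dmata$ gives any control over Kronecker coefficients for \emph{other} pairs $(\la,\mu)$, and there is no elementary dominance-type criterion that would do this. The actual argument in~\cite{vapp} does not proceed by sandwiching $\coefili\alpha\beta\gamma$ between $0$ and $\bili{\permu\alpha\otimes\permu\beta}{\cara\gamma}$; it passes through a general formula (Theorem~1.1 there) expressing the Kronecker coefficient as a signed sum, in which the uniqueness hypothesis $\tmatacnum = 1$ (equivalent to your hypothesis by Theorem~\ref{teor:tova}) collapses the sum to a single surviving term equal to~$1$. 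That cancellation is the substantive content you are missing.
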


We have the following characterization of minimality.

\begin{teor} \label{teor:cara-mini}
\emph{\cite[Prop.~3.1]{vapp}}
Let $A\in \dmatapi$.
Then, $A$ is a minimal matrix if and only if the graph map
\begin{equation*}
\fun {\sf G} {\dmatapi}{\tmatac}
\end{equation*}
is bijective.
\end{teor}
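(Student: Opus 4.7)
The plan is to exploit that $\sf G$ has already been observed to be injective (in the paragraph just before the theorem), so bijectivity reduces to surjectivity. I would introduce the auxiliary projection map $\Pi\colon {\sf M}^*(\alpha,\beta,\gamma^\prime) \to \dmata$ defined by
\begin{equation*}
\Pi(X)_{i,j} = \sum_{k} x_{i,j,k}.
\end{equation*}
A trivial check gives $\Pi \circ {\sf G} = \mathrm{id}_{\dmatapi}$, since the graph of a matrix projects back to it. Consequently, $\sf G$ is surjective exactly when, for every $X \in \tmatac$, the projection $B := \Pi(X)$ lies in $\dmatapi$ and $X = \grafi B$.

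The core step is a sorting lemma. Given $X \in \tmatac$, sort each fibre $(x_{i,j,1},\dots,x_{i,j,r})$ in weakly decreasing order; the result is precisely $\grafi B$ where $B = \Pi(X) \in \dmata$. Since sorting only moves $1$'s to strictly smaller values of $k$, for every $k$ the partial sum $\sum_{k' \le k} (\parti B)'_{k'}$ is at least $\sum_{k' \le k} \gamma'_{k'}$. This yields $(\parti B)' \mayori \gamma'$, equivalently $\parti B \menori \gamma$. Moreover, equality throughout the dominance (i.e. $\parti B = \gamma$) forces, column by column, that no $1$ actually moved during the sort, so in that case $X = \grafi B$.

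For the direction $(\Rightarrow)$, assume $A$ is minimal. Then no $C \in \dmata$ satisfies $\parti C \menore \gamma$, so the sorting lemma forces $\parti{\Pi(X)} = \gamma$ for every $X \in \tmatac$; its second clause then gives $X = \grafi{\Pi(X)}$, so $\sf G$ is surjective. For $(\Leftarrow)$, suppose $A$ is not minimal and pick $B \in \dmata$ with $\parti B \menore \gamma$. Then $(\parti B)' \mayore \gamma'$ and both partitions have total $|\alpha|$, which is exactly the Gale--Ryser hypothesis for the existence of a $0$-$1$ matrix of size $pq \times r$ with row sums $(b_{i,j})_{(i,j)}$ and column sums $\gamma'$. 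Reading such a matrix back as a three-dimensional binary matrix produces an $X \in \tmatac$ with $\Pi(X) = B$; because $\parti B \neq \gamma$, this $X$ is not the graph of any matrix in $\dmatapi$, contradicting surjectivity. Hence $A$ must be minimal.

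The main obstacle is the $(\Leftarrow)$ direction, which requires producing an element of $\tmatac$ projecting onto a prescribed non-minimal $B$. This step rests on the classical Gale--Ryser theorem, whose dominance condition is supplied automatically by $\parti B \menore \gamma$; the only real content is the translation between three-dimensional binary matrices (grouped by their $\Pi$-fibre) and $0$-$1$ matrices in two dimensions. Everything else is driven by the simple but decisive observation that column-sorting can only shift $1$'s toward lower levels of $k$.
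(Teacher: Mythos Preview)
Your argument is correct. The sorting lemma is sound: for each fibre $(i,j)$ the partial count $\sum_{k'\le k} x_{i,j,k'}$ is bounded above by $\min(k,b_{i,j})$, and summing over $(i,j)$ gives the dominance $(\parti B)'\mayori\gamma'$; equality of all partial sums forces each fibre to be already sorted, so $X=\grafi B$. For the converse, the Gale--Ryser condition is indeed $(\parti B)'\mayori\gamma'$, and the constraint $(\parti B)_1\le\gamma_1=\ell(\gamma')$ needed for the row sums to be realizable in $r$ columns follows from $\parti B\menori\gamma$, so nothing is missing there.

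Note, however, that the paper does not supply its own proof of this theorem: it simply cites \cite[Prop.~3.1]{vapp}, whose argument uses characters of the symmetric group, and points to \cite[Thm.~5.3]{elenot} and \cite[Thm.~13]{libro} for combinatorial proofs. Your route is purely combinatorial and therefore different in spirit from the original character-theoretic one; it is essentially the natural ``sort the fibres, then invoke Gale--Ryser'' argument, which is presumably close to what the cited combinatorial references do. The advantage of your approach is that it is self-contained and makes the role of dominance transparent; the character proof, by contrast, packages the same dominance information inside identities such as~\eqref{ecua:permu-mat} and~\eqref{ecua:kos-pos}, which is less elementary but ties the result more directly to the representation-theoretic framework used elsewhere in the paper.
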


The proof in~\cite{vapp} uses characters.
For a combinatorial proof see either~\cite[Thm.~5.3]{elenot}
or~\cite[Thm.~13]{libro}.
The classification of minimal matrices of size $2 \times q$
is known~\cite[Thm.~1.1]{minidosq}, but, in general, it is not easy
to decide if a matrix is minimal and $\pi$-unique (see~\cite[Thm.~2.7]{gridev}.

Now we turn to the notion of additivity, which is fundamental for our
main theorem.
It appears already, with no name, in the characterization
of $(0,1)$-additivity given in~\cite{aditivo}.

\begin{defi}
Let $A=(a_{i,j})$ be a matrix of size $p\times q$ with nonnegative
integer entries.
Then, $A$ is called \emph{additive}~\cite[\S~6]{elenot} if there exist real numbers
$x_1, \dots, x_p$, $y_1, \dots, y_q$ such that
\begin{equation*}
a_{i,j} > a_{k,l} \implica x_i + y_j > x_k + y_l,
\end{equation*}
for all $i$, $k\in \nume p$ and all $j$, $l\in \nume q$.
Let $X = (x_{i,j,k})$ be a binary matrix of size $p \times q \times r$.
$X$ is called \emph{$(0,1)$-additive}~\cite[p.~150]{flrs} if there are real numbers
$x_1, \dots, x_p$, $y_1, \dots, y_q$, $z_1, \dots, z_r$ such that,
for all $i\in \nume p$, $j\in \nume q$, $k\in \nume r$,
\begin{equation} \label{ecua:adi-bin}
x_{i,j,k} = 1 \sii x_i + y_j + z_k \ge 0.
\end{equation}
Note that $X$ is called additive in~\cite{flrs}.
We call it $(0,1)$-additive to distinguish it from the other concept of
additivity.
\end{defi}

First we show how these two concepts are related.

\begin{teor} \label{teor:adi-adi}
{\em \cite[Thm.~1]{aditivo}}
Let $A$ be a matrix with nonnegative integer entries.
Then $A$ is additive if and only if $\grafi A$ is $(0,1)$-additive.
\end{teor}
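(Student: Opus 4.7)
The plan is to prove both directions by choosing witnesses explicitly and reading off the equivalence from the construction. Write $A = (a_{i,j})$ of size $p \times q$, set $r = \max_{i,j} a_{i,j}$, and recall that $\grafi A = (a_{i,j,k})$ is the $p \times q \times r$ binary matrix with $a_{i,j,k} = 1$ iff $k \leq a_{i,j}$.

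For the forward direction, suppose $A$ is additive with witnesses $x_1, \ldots, x_p$ and $y_1, \ldots, y_q$. For each level $k \in \nume r$ set
\[
M_k = \min\{\,x_i + y_j : a_{i,j} \geq k\,\}, \qquad m_k = \max\{\,x_i + y_j : a_{i,j} < k\,\},
\]
with the convention $m_k = -\infty$ when the second set is empty. Since the entries of $A$ are integers, $a_{i,j} \geq k > a_{i',j'}$ forces $a_{i,j} > a_{i',j'}$, so the additivity hypothesis yields $x_i + y_j > x_{i'} + y_{j'}$; this gives $M_k > m_k$. Picking any $z_k \in [-M_k, -m_k)$, one checks that $x_i + y_j + z_k \geq 0$ iff $x_i + y_j \geq -z_k$ iff $a_{i,j} \geq k$, which is precisely condition~\eqref{ecua:adi-bin}, so $\grafi A$ is $(0,1)$-additive.

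For the reverse direction, suppose $\grafi A$ is $(0,1)$-additive with witnesses $x_i$, $y_j$, $z_k$. I claim the same $x_i$, $y_j$ witness additivity of $A$. Indeed, if $a_{i,j} > a_{i',j'}$, then $s := a_{i,j} \geq 1$ lies in $\nume r$, and the graph entries $a_{i,j,s} = 1$, $a_{i',j',s} = 0$ translate, via~\eqref{ecua:adi-bin}, into
\[
x_i + y_j + z_s \geq 0 > x_{i'} + y_{j'} + z_s,
\]
whence $x_i + y_j > x_{i'} + y_{j'}$, as required.

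The only real subtlety lies in the forward direction, namely ensuring that a single family $\{z_k\}_{k \in \nume r}$ works simultaneously for every level. This is resolved by the observation that $M_k$ and $m_k$ depend only on the already fixed witnesses $x_i$, $y_j$ of the additivity of $A$, so the $z_k$ may be chosen independently, one for each $k$, in the corresponding nonempty interval $[-M_k, -m_k)$. Integrality of the entries of $A$ is precisely what converts the weak level-set separation into the strict inequality $M_k > m_k$ that makes those intervals nonempty.
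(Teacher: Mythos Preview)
Your proof is correct. Both directions are handled cleanly: in the forward direction the level-by-level choice of $z_k$ via the separation $M_k > m_k$ is exactly the right idea, and in the reverse direction picking $s = a_{i,j}$ to read off the strict inequality from condition~\eqref{ecua:adi-bin} is the natural move. One small quibble: the inequality $a_{i,j} \ge k > a_{i',j'} \implies a_{i,j} > a_{i',j'}$ is immediate from transitivity and does not actually rely on integrality of the entries of $A$, so your concluding paragraph slightly overstates the role of integrality; but this does not affect the argument.

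As for comparison with the paper: the paper does not prove this theorem at all. It is stated as a citation of Theorem~1 in the reference~\cite{aditivo} and used as a black box. So there is no ``paper's own proof'' to compare your approach against here; you have supplied a complete argument where the present paper only records the statement.
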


In the next example we illustrate the concepts of minimal, $\pi$-unique
and additive (see Example~8 in~\cite{libro}).

\begin{ejem} \label{ejem:mua}
Let
\begin{equation*}
A=
\begin{bmatrix}
3 & 3 & 1 \\
2 & 1 & 1 \\
2 & 0 & 0
\end{bmatrix}
,
\quad
B=
\begin{bmatrix}
4 & 4 & 1 \\
2 & 1 & 1 \\
2 & 0 & 0
\end{bmatrix}
\quad \textrm{and} \quad
C=
\begin{bmatrix}
4 & 3 & 2 \\
3 & 1 & 0 \\
1 & 1 & 0
\end{bmatrix}
.
\end{equation*}
The three matrices $A$, $B$, $C$ are plane partitions.
The first one is minimal, but not $\pi$-unique.
The first assertion can be checked directly by hand;
for the second just take the transpose of $A$.
The matrix $B$ is $\pi$-unique, but not minimal.
The first assertion can be checked by hand; for the second observe
that $C$ has the same 1-marginals as $B$, and
$\parti C \menore \parti B$.
Finally the matrix $C$ is additive.
To see this take $(x_1, x_2, x_3) = (7,2,0)$ and $(y_1, y_2, y_3) = (6,3,0)$.
\end{ejem}

Now we show some relations between being additive, minimal, $\pi$-unique
and a plane partition.
We start with the following result, which is an equivalent formulation of~\cite[Thm.~2]{aditivo}.
See~\cite[Thm.~15]{libro} for a simpler proof.

\begin{teor} \label{teor:dpq}
{\em \cite[Thm.~2]{aditivo}}
Let $A \in \dmata$ of size $2 \times q$.
Then, the conditions of $A$ being a plane partition; minimal and
$\pi$-unique; and additive are equivalent.\footnote{Compare with Proposition~6.9
in~\cite{stem1}.}
\end{teor}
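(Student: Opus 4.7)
My plan is to establish the cycle of implications
\[
\text{additive}\ \implica\ \text{minimal and $\pi$-unique}\ \implica\ \text{plane partition}\ \implica\ \text{additive},
\]
where the first two hold for matrices of arbitrary shape and only the last implication requires the $2\times q$ hypothesis.

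For the first implication I would invoke Theorem~\ref{teor:adi-adi}, which gives that $\grafi A$ is $(0,1)$-additive. The standard result of~\cite{flrs} that every $(0,1)$-additive binary matrix is a matrix of uniqueness then yields $\tmatacnum=1$, and the reformulation of Theorem~\ref{teor:tova} stated immediately after it delivers that $A$ is minimal and $\pi$-unique. The second implication is exactly the \emph{moreover} clause of Theorem~\ref{teor:tova}.

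The heart of the argument is the third implication. Given a $2\times q$ plane partition $A$ with row-sum $\alpha$ and column-sum $\beta$, I set $z = x_1 - x_2$ and reduce the additivity condition to finding reals $z$ and $y_1,\dots,y_q$ satisfying: $y_j > y_l$ whenever $a_{i,j} > a_{i,l}$ for some $i$; $z > y_l - y_j$ whenever $a_{1,j} > a_{2,l}$ (forward constraints); and $z < y_l - y_j$ whenever $a_{2,l} > a_{1,j}$ (reverse constraints). I would carry out a Farkas-type feasibility argument: if the system is infeasible then some nonnegative combination of its strict inequalities equals the zero linear form, and any such combination encodes a cycle alternating between a forward pair $(a_{1,j_1}>a_{2,l_1})$ and a reverse pair $(a_{2,l_2}>a_{1,j_2})$. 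The plane partition structure rules out the case $j_1 > j_2$ and $l_1 < l_2$, since it would produce the contradictory chain
\[
a_{1,j_1}\ >\ a_{2,l_1}\ \ge\ a_{2,l_2}\ >\ a_{1,j_2}\ \ge\ a_{1,j_1};
\]
in the remaining configurations one shows that the partition hypothesis on $\alpha$ and $\beta$ provides the strict positive slack needed to contradict the supposed vanishing of the combination.

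The main obstacle will be the bookkeeping in this Farkas step when the nonnegative combination mixes many forward, reverse, and within-row constraints simultaneously rather than a single matched pair. As a cleaner back-up I would invoke the classification of minimal $2\times q$ matrices in~\cite[Thm.~1.1]{minidosq}, which reduces the construction of $(x,y)$ to a small list of normal forms for which explicit additivity witnesses can be written down by hand. Either route crucially uses that in a $2\times q$ plane partition the forward and reverse cross-row comparisons cannot interleave in an irresolvable way; this genuinely fails in larger shapes, as Example~\ref{ejem:mua} already illustrates, where the $3\times 3$ plane partitions $A$ and $B$ fail to be additive, so the two-row restriction in the third implication cannot be dropped.
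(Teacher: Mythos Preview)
The paper does not supply its own proof of this theorem; it simply records it as an equivalent formulation of \cite[Thm.~2]{aditivo} and points to \cite[Thm.~15]{libro} for a simpler argument. So there is no in-paper proof to compare against, and your proposal must stand on its own.

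Your cycle of implications is the right architecture, and the first two arrows are correctly justified using Theorems~\ref{teor:adi-adi} and~\ref{teor:tova} (indeed they hold in any size). The substance is entirely in the third arrow, and there your argument is genuinely incomplete. The assertion that ``any such combination encodes a cycle alternating between a forward pair and a reverse pair'' is not established: a Farkas certificate is an arbitrary nonnegative combination of possibly many constraints, and reducing it to a single matched forward/reverse pair (or to a concatenation of such) requires a decomposition argument you have not supplied. The one configuration you analyze, $j_1\ge j_2$ and $l_1\le l_2$, is exactly the obstruction when only \emph{one} forward and \emph{one} reverse constraint are present with equal weight; the general certificate mixes several of each together with within-row constraints, and you yourself flag this as ``the main obstacle.'' Note also that the contradiction must come from the plane-partition hypothesis on $A$ (monotone rows \emph{and} columns), not merely from ``the partition hypothesis on $\alpha$ and $\beta$,'' which is strictly weaker and does not suffice.

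Your back-up route through \cite{minidosq} is circular as written. That reference classifies \emph{minimal} $2\times q$ matrices, so to place your plane partition among its normal forms you must already know it is minimal --- which is part of the equivalence you are proving. To salvage this, you would have to verify directly from the statement of \cite[Thm.~1.1]{minidosq} that every $2\times q$ plane partition satisfies the classification's criterion, and only then read off an additivity witness; you have not indicated how to do this.
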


This result does not hold in general.
Already, the matrix $A$ in Example~\ref{ejem:mua} is a plane partition
that is not $\pi$-unique, and $B$ is a plane partition that is not
minimal.
Obstructions for a plane partition $A$ of size $3\times 3$ to be minimal
and $\pi$-unique are given in~\cite[\S5]{vapir}.
These correspond to the obstructions for additivity given by the arrow diagram
in~\cite[Fig.~3]{sava} and its transpose.
It is not difficult to show that these are all obstructions to additivity
for a plane partition of size $3\times 3$ (see~\cite[Ch.~2]{san}).\footnote{Compare
with Proposition~6.11 in~\cite{stem1}.}

We have however one implication (see also the proof of Corollary~3.4 in~\cite{onva}).

\begin{teor} \label{teor:a-mu}
\emph{\cite[Thm.~6.1]{aditivo}}
Let $A$ be an additive matrix, then $A$ is minimal and $\pi$-unique.
\end{teor}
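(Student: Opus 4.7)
The plan is to use the real weights witnessing additivity to set up a rearrangement-inequality argument that simultaneously delivers minimality and $\pi$-uniqueness of $A$.

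Let $A=(a_{i,j})$ be additive with witnesses $x_1,\dots,x_p$, $y_1,\dots,y_q$, and set $f(i,j)=x_i+y_j$. I would first perturb $(x,y)$ slightly so that the $N=pq$ values $f(i,j)$ on $\nume p\times\nume q$ are pairwise distinct; this is permissible because the strict-inequality implications defining additivity are preserved under small perturbations while distinctness is a generic condition (the complement of finitely many hyperplanes). Enumerate the positions as $(i_1,j_1),\dots,(i_N,j_N)$ by strictly decreasing $f$-value; the contrapositive of additivity then gives $a_{i_1,j_1}\ge\cdots\ge a_{i_N,j_N}$, so this listing is exactly $\parti A$.

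For any $B\in\dmata$, the weighted sum $\sum_{i,j}f(i,j)\,b_{i,j}=\sum_i x_i\alpha_i+\sum_j y_j\beta_j$ depends only on the marginals and so equals $\sum_{i,j}f(i,j)\,a_{i,j}$. The rearrangement inequality, with the strictly decreasing sequence $f(i_k,j_k)$ and the multiset of entries of $B$, yields
\begin{equation*}
\sum_{k=1}^{N}f(i_k,j_k)\,b_{i_k,j_k}\;\le\;\sum_{k=1}^{N}f(i_k,j_k)\,\parti B_k,
\end{equation*}
with equality exactly when $(b_{i_k,j_k})_k$ is already weakly decreasing. Combining the two identities gives $\sum_k f(i_k,j_k)\,\parti A_k\le\sum_k f(i_k,j_k)\,\parti B_k$.

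Now assume $\parti B\preccurlyeq\parti A$. Writing $c_k=\parti B_k-\parti A_k$, we have $\sum_k c_k=0$, $\sum_{k\le K}c_k\le 0$ for all $K$, and $\sum_k f(i_k,j_k)\,c_k\ge 0$. Abel summation rewrites the last sum as $\sum_{K=1}^{N-1}\bigl(f(i_K,j_K)-f(i_{K+1},j_{K+1})\bigr)\sum_{k\le K}c_k$, in which each coefficient is strictly positive and each partial sum is nonpositive; the only way the total can be nonnegative is for every partial sum to vanish, which forces $\parti B=\parti A$ (minimality). We are then at equality in the rearrangement inequality, which, since the values $f(i_k,j_k)$ are all distinct, forces $b_{i_k,j_k}=a_{i_k,j_k}$ for every $k$, and hence $B=A$ ($\pi$-uniqueness). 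The main obstacle will be the initial perturbation step: one has to verify that the strict implications defining additivity persist under a sufficiently small perturbation and that, within the resulting open set of valid witnesses, a perturbation producing pairwise distinct $f$-values can be found. Both are routine openness/density arguments, but they are essential, since the whole argument relies on $f$ being injective on $\nume p\times\nume q$.
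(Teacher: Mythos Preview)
Your argument is correct. The perturbation step is sound: the additivity conditions are finitely many strict linear inequalities on $(x,y)\in\real^{p+q}$, hence define an open set, and the locus where some two values $x_i+y_j$ coincide is a finite union of proper hyperplanes, so a generic point of the open set has all $f(i,j)$ distinct. The rearrangement/Abel-summation computation then goes through exactly as you wrote, and the equality case in the rearrangement inequality (with strictly decreasing $f$-values) indeed forces $b_{i_k,j_k}=\parti B_k=\parti A_k=a_{i_k,j_k}$.

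The paper does not give its own proof of this statement; it cites \cite[Thm.~6.1]{aditivo} and points to the proof of \cite[Cor.~3.4]{onva}. The route indicated there is the geometric one developed in Section~\ref{sec:geomadi}: additivity is equivalent to real-minimality (Theorem~\ref{teor:adi-rm}), which by Proposition~\ref{prop:rm-pit} means $\permuta{\Phi(A)}\cap\Phi(\transport\alpha\beta)=\{\Phi(A)\}$; restricting to integer points immediately gives both minimality and $\pi$-uniqueness. Your proof is essentially the linear-algebra kernel of that argument, carried out by hand: the linear functional $X\mapsto\sum f(i,j)x_{i,j}$ is constant on $\transport\alpha\beta$ and attains its maximum over $\permuta{\parti A}$ uniquely at $\Phi(A)$, which is exactly what the rearrangement inequality and Abel summation encode. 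So the two approaches are equivalent in spirit; yours is more elementary and self-contained, while the polytope formulation makes the connection to Theorem~\ref{teor:adi-pit} (used later in the proof of the main theorem) transparent.
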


The converse is not true (see~\cite[Ex.~7.4]{elenot} and~\cite[\S3]{sava}).

\begin{ejem} \label{ejem:uni-no-adi}
The matrix
\begin{equation*}
A =
\begin{bmatrix}
5 & 5 & 5 & 4 & 4 \\
5 & 5 & 5 & 3 & 3 \\
3 & 3 & 1 & 1 & 0 \\
2 & 1 & 1 & 1 & 0 \\
2 & 1 & 0 & 0 & 0
\end{bmatrix}
\end{equation*}
is minimal and $\pi$-unique but not additive.
\end{ejem}

The next result follows from Theorems~\ref{teor:a-mu} and~\ref{teor:tova}.
Recall that we are assuming that the row-sum and column-sum vectors are
weakly decreasing.
For a simple direct proof see~\cite[Lemma~2.6]{sava}

\begin{coro}
Every additive matrix is a plane partition.
\end{coro}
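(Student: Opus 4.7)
The plan is to give a short direct argument from the definition of additivity together with the hypothesis (in force since the beginning of Section~\ref{sec:disctom}) that the row-sum vector $\alpha$ and the column-sum vector $\beta$ are partitions, i.e., weakly decreasing. The corollary also follows immediately by chaining Theorem~\ref{teor:a-mu} (additivity implies minimal and $\pi$-unique) with the \emph{moreover} clause of Theorem~\ref{teor:tova} (minimal and $\pi$-unique implies plane partition), but the direct proof is essentially a one-step exchange argument, so I would present it.

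First I would fix additive witnesses $x_1,\dots,x_p$ and $y_1,\dots,y_q$ for $A=(a_{i,j})\in\dmata$, and show that the columns of $A$ are weakly decreasing. Suppose for contradiction that some column is not weakly decreasing, so there exist indices $i,j$ with $a_{i,j} < a_{i+1,j}$. By the additive implication applied to this pair of entries we get $x_i + y_j < x_{i+1} + y_j$, hence $x_i < x_{i+1}$. The key observation is now that this forces the same direction of comparison in every column: if there were any column $l$ and any pair of rows $i,i+1$ with $a_{i,l} > a_{i+1,l}$, additivity would give $x_i + y_l > x_{i+1} + y_l$, contradicting $x_i < x_{i+1}$. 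Consequently $a_{i,l} \le a_{i+1,l}$ for all $l\in\nume q$, with strict inequality at $l=j$.

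Summing over $l$ yields $\alpha_i = \sum_l a_{i,l} < \sum_l a_{i+1,l} = \alpha_{i+1}$, which contradicts the assumption that $\alpha$ is a partition. Hence every column of $A$ is weakly decreasing. The argument for rows is completely symmetric: a strict increase $a_{i,j} < a_{i,j+1}$ in some row would force $y_j < y_{j+1}$, hence $a_{k,j} \le a_{k,j+1}$ for all $k$ with strict inequality at $k=i$, giving $\beta_j < \beta_{j+1}$ and contradicting the fact that $\beta$ is a partition.

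I do not foresee any real obstacle; the only place where one must be careful is to use the additive implication in its correct (contrapositive) direction and to remember that the assumption ``$\alpha$ and $\beta$ are weakly decreasing'' was installed as a blanket hypothesis in the paragraph \textbf{Integral matrices} at the start of Section~\ref{sec:disctom}. Without that hypothesis the statement would fail, since permuting rows or columns of an additive matrix (and correspondingly permuting the $x_i$ or $y_j$) yields another additive matrix that need not have monotone rows and columns.
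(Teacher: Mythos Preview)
Your proposal is correct. The paper's own proof is precisely the chaining argument you mention first---it deduces the corollary from Theorem~\ref{teor:a-mu} (additive $\Rightarrow$ minimal and $\pi$-unique) together with the moreover clause of Theorem~\ref{teor:tova}, and merely points to \cite[Lemma~2.6]{sava} for a direct argument. The exchange argument you spell out afterwards is exactly such an elementary direct proof: it bypasses the heavier machinery of minimality and $\pi$-uniqueness entirely and uses only the definition of additivity plus the standing hypothesis that $\alpha$ and $\beta$ are partitions. What your direct route buys is self-containment and transparency; what the paper's route buys is brevity, since both ingredients are already in hand. Your closing remark that the blanket hypothesis on $\alpha$, $\beta$ is essential is also apt.
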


Next result follows from Theorems~\ref{teor:a-mu} and~\ref{teor:uni-kron}.

\begin{teor} \label{teor:adi-uni-kron}
Let $A \in \dmatapi$ be additive.
Then,
\begin{equation*}
\coefili \alpha\beta\gamma = 1.
\end{equation*}
\end{teor}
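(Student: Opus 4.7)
The plan is to read this statement as a direct concatenation of two results already stated in this section. Given an additive $A \in \dmatapi$, I would first invoke Theorem~\ref{teor:a-mu} to deduce that $A$ is both minimal and $\pi$-unique. I would then apply Theorem~\ref{teor:uni-kron}, whose hypothesis is precisely ``$A \in \dmatapi$ minimal and $\pi$-unique,'' to conclude $\coefili{\alpha}{\beta}{\gamma} = 1$. That is essentially the entire proof.

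For a self-contained exposition I would spend a couple of lines recalling why minimality and $\pi$-uniqueness force the relevant character pairing to collapse to a single term. Combining Young's rule~\eqref{ecua:young} with identity~\eqref{ecua:permu-mat} gives
\begin{equation*}
\bili{\permu\alpha \otimes \permu\beta}{\cara\gamma} = \sum_{B \in \dmata} \kos{\gamma}{\parti B}.
\end{equation*}
Minimality of $A$ forces $\parti B \mayori \gamma$ for every $B \in \dmata$, while~\eqref{ecua:kos-pos} says that $\kos{\gamma}{\parti B} > 0$ requires $\gamma \mayori \parti B$; together these force $\parti B = \gamma$. By $\pi$-uniqueness only $B = A$ contributes, leaving $\kos{\gamma}{\gamma} = 1$. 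Passing from this permutation-character pairing to the irreducible coefficient $\coefili{\alpha}{\beta}{\gamma}$ is the standard triangularity argument using Young's rule on both $\permu\alpha$ and $\permu\beta$, carried out in~\cite{vapp}.

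There is no real obstacle here: the statement is a clean corollary, and all of the substantive combinatorics already lives inside Theorem~\ref{teor:a-mu} (additivity forces minimality and $\pi$-uniqueness via the $(0,1)$-additive structure of $\grafi A$ and Theorem~\ref{teor:adi-adi}) and Theorem~\ref{teor:uni-kron}. My only editorial decision would be whether to cite Theorem~\ref{teor:uni-kron} directly, as suggested by the paper's own preceding sentence ``Next result follows from Theorems~\ref{teor:a-mu} and~\ref{teor:uni-kron},'' or to fold in the short sum-collapsing derivation above for the reader's convenience.
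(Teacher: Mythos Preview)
Your proposal is correct and matches the paper's approach exactly: the paper offers no separate proof, simply remarking that the result follows from Theorems~\ref{teor:a-mu} and~\ref{teor:uni-kron}, which is precisely your argument. One small wording slip in your optional exposition: minimality of $A$ does not give $\parti B \mayori \gamma$ for every $B\in\dmata$ (dominance is only a partial order), only $\parti B \not\menore \gamma$; combined with~\eqref{ecua:kos-pos} this still forces $\parti B = \gamma$ for the nonzero summands, so your conclusion stands.
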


Since the application of our main theorem depends on examples of additive
matrices, we will give below some examples.

\begin{ejems} \label{ejems:adi-tres}
The following are additive matrices
\begin{equation*}
A= \left[
\begin{matrix}
r & \cdots & r \\
\vdots & \ddots & \vdots \\
r & \cdots & r
\end{matrix}
\right],
\quad
B= \left[
\begin{matrix}
c+1 & 1 & \cdots & 1 \\
1 & 0 & \cdots & 0 \\
\vdots & \vdots & \ddots & \vdots \\
1 & 0 & \cdots & 0
\end{matrix}
\right],
\quad
C = \left[
\begin{matrix}
k   & k-1 & \cdots & 2 & 1 \\
k-1 & k-2 & \cdots & 1 & 0 \\
\vdots & \vdots &  \ddots & \vdots & \vdots \\
2 & 1 & \cdots & 0 & 0 \\
1 & 0 & \cdots & 0 & 0
\end{matrix}
\right].
\end{equation*}
The graph of $A$ is a 3-dimensional box of size $p \times q \times r$;
the graph of $B$ is called \emph{tripod}, it has size $(a+1) \times (b+1) \times (c+1)$
and 1-marginals $(b+c+1, 1^a)$, $(a+c+1, 1^b)$, and $(a+b+1, 1^c)$.
In both examples additivity is easy to prove.
The graph of $C$ is a pyramid of size $k\times k \times k$.
To show that $C$ is additive, take $\vector xk = \vector yk = (k-1, k-2, \dots, 1,0)$,
(see~\cite[Ex.~12]{libro}).
\end{ejems}

\begin{ejem} \label{ejem:young}
Let $\alpha$ be a partition.
Let $A$ be the only binary matrix in $\dmatli \alpha{\alpha^\prime}$.
Then $A$ is additive.
For example, if $\alpha =(4,2,1)$, then
\begin{equation*}
A =
\begin{bmatrix}
1 & 1 & 1 & 1 \\
1 & 1 & 0 & 0 \\
1 & 0 & 0 & 0
\end{bmatrix}.
\end{equation*}
To see that $A$ is additive take $x_i = \alpha_i$ and $y_j = (\alpha^\prime)_j$.
\end{ejem}

\begin{simetria} \label{para:sime}
The natural action of the symmetric group $\sime 3$ in $\natural^3$ permuting coordinates
induces an action of $\sime 3$ on the set of finite subsets of $\natural^3$, and therefore
on 3-dimensional binary matrices.
Since the definition of $(0,1)$-additivity is symmetric under the action of $\sime 3$
(see~\eqref{ecua:adi-bin}), we have, for any $\sigma\in \sime 3$, that
$X$ is $(0,1)$-additive if and only if $ \sigma X$ is $(0,1)$-additive.
The action of $\sime 3$ on $\natural^3$ restricts to the set of pyramids, and
therefore to the set of plane partitions.
Thus, because of Theorem~\ref{teor:adi-adi},
for any plane partition $A$ and any $\sigma\in \sime 3$, we have that
$A$ is additive if and only if $\sigma A$ is additive.
There is also another operation on plane partitions called complementation~\cite[\S~2]{stanspp},
which combined with the elements in $\sime 3$ generates a group $T$ with 12 elements.
It is not difficult to prove for a plane partition $A$ that, $A$ is additive if and only if
its complement is additive.
Therefore if $A$ is an additive plane partition, we can generate with the action of $T$
up to 12 different additive matrices.
For example, since the matrix $C$ in Example~\ref{ejem:mua} is additive, then
the matrices
\begin{equation*}
\begin{bmatrix}
3 & 2 & 2 \\
3 & 1 & 0 \\
2 & 1 & 0 \\
1 & 0 & 0
\end{bmatrix},
\text{ and }
\begin{bmatrix}
4 & 3 & 3 \\
4 & 3 & 1 \\
2 & 1 & 0
\end{bmatrix}
\end{equation*}
are additive.
The first is $(1\, 2\, 3)C$, where $(1\, 2\, 3)$ is a 3-cycle in $\sime 3$;
the second is the complement of $C$.
One can also prove similar results for matrices of uniqueness, but we will not need
them here.
\end{simetria}

\begin{defi} \label{defi:adi-tri}
We say that $(\alpha, \beta, \gamma)$ is an \emph{additive triple} if
there is an additive matrix $A\in\dmatapi$.
Using the $\sime 3$-action on $\grafi A$ we obtain that
$(\gamma^\prime, \alpha, \beta^\prime)$,
$(\beta,\gamma^\prime,\alpha^\prime)$,
$(\beta,\alpha,\gamma)$,
$(\alpha,\gamma^\prime,\beta^\prime)$ and
$(\gamma^\prime,\beta,\alpha^\prime)$
are also additive triples.
\end{defi}

\begin{obse}
The additive triples $(\alpha, \beta, \gamma)$ and
$(\gamma^\prime, \alpha, \beta^\prime)$ yield different stability
properties.
The identity $\coefi = \coefili \la{\mu^\prime}{\nu^\prime}$ and
other symmetries of Kronecker coefficients are not enough to prove that
they yield  the same stability property, because, in general
$(\mu + \beta)^\prime \neq \mu^\prime + \beta^\prime$.
\end{obse}

\begin{ejem} \label{ejem:adi-tri}
Let $\beta = \vector \beta b\vdash n$.
Then,
$ B =
\begin{bmatrix}
\beta_1 & \cdots & \beta_b
\end{bmatrix}$
is an additive matrix and $((n), \beta, \beta)$ is an additive triple.
Hence, also $(\beta, \beta^\prime, (1^n)) $
is an additive triple.
This is Example~\ref{ejem:young}.
\end{ejem}

\begin{obstru}
There are also ways of proving that a matrix is not additive.
In~\cite[Thm.~3.8]{sava} we showed that certain arrow diagrams are
obstructions to additivity.
While the case $2\times q$ is fairly simple (Theorem~\ref{teor:dpq}),
the general case is much more complex.
We showed in~\cite[\S~5]{sava} that there are infinitely many
essentially different obstructions needed for deciding additivity
of plane partitions with three rows.
\end{obstru}

\begin{complejidad} \label{para:comple}
Despite the existence of infinitely many essentially different obstructions
for deciding additivity of a matrix with nonnegative integer entries,
this can be done in polynomial time~\cite[Thm.~7.1]{onva}.
In contrast, deciding uniqueness or if $\tmatanum$ is positive are each NP-complete
(see~\cite[Thm.~3.1]{bdg} and~\cite[Thm.~2.7]{gridev}).
\end{complejidad}

\section{Geometry of additive matrices} \label{sec:geomadi}
\hfill

In this section we record a geometric characterizations of minimality
and additivity from~\cite{onva}.
One of them will be central in our proof of additive stability.

We start by extending some notions defined for objects with integer
entries to objects with real entries.
For a vector $a=\vector a m\in\real^m$, we denote by
$\parti{a}= (a_{\conjunto 1},\dots, a_{\conjunto m})$
the vector formed by the entries of $a$ arranged in weakly
decreasing order.
We say that $a$ is \emph{majorized} by $b = \vector bm$ (see~\cite{halipo, maol}),
and denote it by $a \menori b$, if
\begin{equation*}
\sum_{i=1}^m a_i=\sum_{i=1}^m b_i,\ \text{and}\
\sum_{1=1}^k a_{\conjunto i} \le \sum_{i=1}^k b_{\conjunto i},
\text{ for all } k \in \nume m.
\end{equation*}
If $a \menori b$ and $\parti a\neq \parti b$,
then we write $a \menore b$.

Let $\alpha$, $\beta$ be two partitions of the same size.
Denote by $\trapo$ the set of all matrices with nonnegative real entries,
row-sum vector $\alpha$ and column-sum vector $\beta$.
$\trapo$ is called a \emph{transportation polytope}.
We say that a matrix $A\in \trapo$ is \emph{real-minimal}~\cite{onva}, if there
is no other matrix $B\in \trapo$ such that $\parti B \menore \parti A$.
Also the definition of additivity can be extended in a straightforward manner
to matrices with real entries.

\begin{teor} \label{teor:adi-rm}
{\em \cite[Thm.~6.2]{onva}}
Let $A\in\trapo$.
Then $A$ is additive if and only if $A$ is real-minimal.
\end{teor}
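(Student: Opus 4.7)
The plan is to prove the two implications separately, using the rearrangement inequality with Abel summation for ``additive $\Rightarrow$ real-minimal'' and Gordan-type LP duality for the converse.

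For the forward direction, let $(x,y)$ witness additivity of $A$ and set $z_{i,j}=x_i+y_j$. Since $\sum_{i,j} z_{i,j}\,b_{i,j}=\sum_i x_i\alpha_i+\sum_j y_j\beta_j$ is constant on $\trapo$, we have $\sum_{i,j} a_{i,j}z_{i,j}=\sum_{i,j} b_{i,j}z_{i,j}$ for every $B\in\trapo$. Additivity allows us to enumerate the positions so that both the $a$-values and the $z$-values are weakly decreasing along the enumeration; this gives $\sum_{i,j} a_{i,j}z_{i,j}=\sum_k a_{\conjunto k}z_{\conjunto k}$, while the rearrangement inequality bounds $\sum_{i,j} b_{i,j}z_{i,j}\le\sum_k b_{\conjunto k}z_{\conjunto k}$. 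Combining these yields $\sum_k b_{\conjunto k}z_{\conjunto k}\ge\sum_k a_{\conjunto k}z_{\conjunto k}$. On the other hand, if $\parti B\menore\parti A$ for some $B\in\trapo$, then Abel summation gives the reverse inequality, with equality only when each gap $z_{\conjunto k}-z_{\conjunto{k+1}}$ vanishes at an index where the majorization is strict. A small generic perturbation of $(x,y)$---permitted since additivity is an open condition on $\real^{p+q}$---yields a new witness making all the values $z_{i,j}$ pairwise distinct, which forces every such gap to be strictly positive and produces the desired contradiction.

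For the converse, suppose $A$ is real-minimal but not additive. Then the system of strict inequalities $\{x_i+y_j-x_k-y_l>0 : a_{i,j}>a_{k,l}\}$ is infeasible in $(x,y)\in\real^{p+q}$, so by Gordan's theorem there exist nonnegative scalars $\lambda_{(i,j),(k,l)}\ge 0$, not all zero, indexed over pairs with $a_{i,j}>a_{k,l}$, annihilating the corresponding constraint vectors. Interpreting each such indexed pair as a directed edge from $(i,j)$ to $(k,l)$, Gordan's identity translates to the statement that the matrix $D$ of net inflows has zero row sums and zero column sums; consequently $B=A+\epsilon D\in\trapo$ for all sufficiently small $\epsilon>0$. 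Since every active edge transports mass strictly downward in the $a$-order, the top-$k$ positions of $A$ receive no inflow from outside for any $k$, so $\sum_{i\le k}b_{\conjunto i}\le\sum_{i\le k}a_{\conjunto i}$. Choosing $k$ at a level where some active edge crosses outward produces a strict decrease, giving $\parti B\menore\parti A$ and contradicting real-minimality.

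The main technical obstacle lies in the converse direction: one must handle ties among entries of $A$ when identifying ``top-$k$'' positions coherently, and verify that at least one of the partial sums strictly decreases. This will rely on the acyclicity of the edge graph---each active edge strictly drops the $a$-value---and on the non-triviality of $\lambda$, which guarantees that some active edge crosses a suitable level-$k$ boundary.
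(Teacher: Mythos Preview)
The paper does not prove this result; it is quoted from \cite{onva} without argument, so there is no in-paper proof to compare against.

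Your approach is the standard rearrangement/LP-duality route and is sound. The forward direction is clean once you perturb the witness so that all values $z_{i,j}=x_i+y_j$ are distinct (legitimate, since additivity is defined by finitely many strict inequalities and is therefore an open condition in $(x,y)$): then constancy of $\sum_{i,j} z_{i,j}b_{i,j}$ on $\trapo$, the rearrangement inequality, and Abel summation together force $\parti B=\parti A$ whenever $B\in\trapo$ satisfies $\parti B\menori\parti A$, so no $B$ lies strictly below $A$.

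For the converse, Gordan's theorem produces the nonnegative, nontrivial certificate $\lambda$, and the matrix $D=\sum\lambda_{(i,j),(k,l)}(E_{k,l}-E_{i,j})$ indeed has zero row and column sums; moreover $D_{i,j}<0$ forces $a_{i,j}>a_{k,l}\ge 0$ for some $(k,l)$, so $A+\epsilon D\in\trapo$ for small $\epsilon>0$. The tie-handling you flag is routine once one observes that for $\epsilon$ small the strict inequalities $a_{i,j}>a_{k,l}$ persist in $B=A+\epsilon D$, so the sorted order of $B$ refines that of $A$. Hence at each index $k$ where $\parti A$ strictly drops, the top-$k$ positions of $A$ and of $B$ coincide, and the net $D$-contribution to the top-$k$ sum is $\le 0$ (only outgoing edges). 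For $k$ inside a tied block, the inflow from above into any $(k-l)$-subset of that block is bounded by the outflow already subtracted from the block above it, which still yields $\sum_{s\le k} b_{\conjunto s}\le\sum_{s\le k} a_{\conjunto s}$. Strictness follows because any $\lambda_{(i,j),(k,l)}>0$ gives an edge crossing some level boundary of $\parti A$. So your sketch is correct; only the level-boundary bookkeeping needs to be written out explicitly.
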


Let $a\in\real^m$ and $\rho$ be a permutation in the symmetric
group $\sime m$.
Denote by $a_\rho$ the vector $ (a_{\rho(1)},\dots,a_{\rho(m)})$.
The \emph{permutohedron} determined by $a$ is the convex hull
of the set of all vectors obtained by permuting the entries of $a$:
\begin{equation*}
\permuta a={\rm conv}
\{ a_\rho \mid \rho \in {\sf S}_m \} .
\end{equation*}
It is a convex polytope whose set of vertices is precisely
$\{ a_\rho \mid \rho \in {\sf S}_m \}$.
More generally, its face lattice is known; see for example \cite{bisa, ykk}.

We will make use of the following theorem of Rado (see also \cite[p.~113]{maol})

\begin{teor} \label{teor:rado}
\emph{\cite{ra}}
For any vector $a\in\real^m$
\begin{equation*}
\permuta a=\{ x\in\real^m\mid x \menori a \}.
\end{equation*}
\end{teor}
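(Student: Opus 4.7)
The plan is to prove both inclusions of the stated set equality separately. The containment $\permuta a \subseteq \{x \in \real^m \mid x \menori a\}$ is the easy direction. Each vertex $a_\rho$ of the permutohedron satisfies $a_\rho \menori a$ trivially, since $\parti{a_\rho} = \parti a$, so both the equality of total sums and all the partial-sum inequalities hold with equality. The set on the right is convex: it is cut out by one linear equation and, for each $k \in \nume m$, finitely many inequalities of the form $\sum_{i \in I} x_i \le \sum_{j=1}^k a_{\conjunto j}$ as $I$ ranges over all $k$-subsets of $\nume m$. Since a convex set containing the vertices contains the convex hull, the inclusion follows.

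For the reverse inclusion, I would use the Hardy-Littlewood-P\'olya theorem to produce a doubly stochastic matrix $D$ of size $m\times m$ with $x = Da$ whenever $x \menori a$. After sorting both vectors in weakly decreasing order, the strategy is the T-transform argument: if $x \ne a$, I locate indices $i < j$ with $a_i > x_i$ and $a_j < x_j$, and apply the doubly stochastic matrix that replaces the pair $(a_i, a_j)$ by $(t a_i + (1-t)a_j,\, (1-t)a_i + t a_j)$ for the largest $t \in [0,1]$ which preserves majorization over $x$; this strictly increases the number of coordinates in which the current vector agrees with $x$. After finitely many such steps the vector equals $x$, and the product of the corresponding doubly stochastic matrices is the desired $D$.

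Once $D$ is in hand, I would apply the Birkhoff-von Neumann theorem to decompose $D = \sum_\rho t_\rho P_\rho$ as a convex combination of permutation matrices; then
\begin{equation*}
x = Da = \sum_\rho t_\rho (P_\rho a),
\end{equation*}
and since each $P_\rho a$ is a permutation of the entries of $a$, this exhibits $x$ as a convex combination of vertices of $\permuta a$, completing the proof.

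The hard part of the plan is the Hardy-Littlewood-P\'olya step that converts the combinatorial majorization inequalities into the existence of a doubly stochastic matrix; the easy inclusion and the Birkhoff-von Neumann decomposition are standard once this is accomplished. An alternative route which avoids Birkhoff is a direct induction on $m$: if the sorted $a$ and $x$ share a first or last coordinate, peel it off and reduce; otherwise apply a single T-transform to create a shared coordinate and invoke the inductive hypothesis. I expect the T-transform plus Birkhoff route to be cleanest here because each step is an elementary linear-algebraic manipulation and the geometric content of Birkhoff's theorem matches precisely the geometric statement we want about $\permuta a$.
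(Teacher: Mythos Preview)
The paper does not supply its own proof of this theorem; it is quoted as Rado's result with a bare citation to~\cite{ra} (and implicitly to~\cite{maol}), so there is nothing in the paper to compare your argument against. Your proposal is a correct and standard proof: the easy inclusion follows because each $a_\rho$ trivially satisfies $a_\rho \menori a$ and the majorization constraints cut out a convex set; the reverse inclusion is the classical route through the Hardy--Littlewood--P\'olya characterization $x \menori a \iff x = Da$ for some doubly stochastic $D$, followed by the Birkhoff--von~Neumann decomposition of $D$ into permutation matrices. Your sketch of the $T$-transform argument for HLP is also the usual one. Nothing is missing.
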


In order to state our next results, we denote, by ${\sf M}_{p,q}$ the set
of all matrices with real entries of size $p\times q$ and define
a linear isomorphism $\fun \Phi {{\sf M}_{p,q}} {\real^{pq}}$,
for each $A=(a_{i,j})$, by
\begin{equation*}
\Phi(A) = (a_{11}, a_{12}, \dots, a_{1q}, a_{21}, a_{22}, \dots, a_{2q},
\dots, a_{p1}, a_{p2}, \dots, a_{pq}).
\end{equation*}

We have the following characterization of minimality.

\begin{prop}
{\em \cite[Thm.~5.6]{onva}}
Let $A \in \dmatapi$.
Then $A$ is minimal if and only if
\begin{equation*}
\permuta {\fimap A} \cap \fimap {\transport \alpha \beta} \cap \entero^{pq} =
\{ \fimap B \mid B \in \dmatapi \}.
\end{equation*}
\end{prop}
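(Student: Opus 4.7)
The plan is to unwrap the three-fold intersection on the left and compare it with the right-hand side using Rado's theorem. First, since $\Phi$ is a linear bijection, I would observe that
\begin{equation*}
\fimap{\transport{\alpha}{\beta}} \cap \entero^{pq} = \{\fimap B \mid B \in \dmata\},
\end{equation*}
because $\fimap B \in \fimap{\transport{\alpha}{\beta}}$ forces $B$ to have nonnegative real entries with row-sum vector $\alpha$ and column-sum vector $\beta$, while $\fimap B \in \entero^{pq}$ forces those entries to be integers; together this is exactly $B\in\dmata$.

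Next, Rado's theorem (Theorem~\ref{teor:rado}) applied with $m = pq$ gives
\begin{equation*}
\permuta{\fimap A} = \{x \in \real^{pq} \mid x \menori \fimap A\}.
\end{equation*}
Because vector majorization depends only on the weakly decreasing rearrangement of its arguments, for any $B\in\dmata$ the condition $\fimap B \menori \fimap A$ is equivalent to $\parti B \menori \parti A$ (viewing both sides as elements of $\real^{pq}$, padding with zeros if necessary). Combining these two observations yields
\begin{equation*}
\permuta{\fimap A} \cap \fimap{\transport{\alpha}{\beta}} \cap \entero^{pq} = \{\fimap B \mid B \in \dmata,\ \parti B \menori \parti A\}.
\end{equation*}

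On the other hand, since $A\in\dmatapi$ forces $\parti A = \gamma$, the right-hand side of the proposition is $\{\fimap B \mid B\in\dmata,\ \parti B = \parti A\}$. Hence the claimed equality reduces to the assertion that every $B \in \dmata$ with $\parti B \menori \parti A$ in fact satisfies $\parti B = \parti A$. Recalling that, on partitions of a fixed size, $\parti B \menore \parti A$ means precisely $\parti B \menori \parti A$ together with $\parti B \neq \parti A$, this assertion is exactly the definition of minimality of $A$, and both directions of the biconditional follow: a $B \in \dmata$ violating minimality would give a $\fimap B$ in the left-hand set but not in the right-hand set, while under the minimality hypothesis every $\fimap B$ in the left-hand set satisfies $\parti B = \parti A$, hence $B \in \dmatapi$; the reverse inclusion $\{\fimap B \mid B\in\dmatapi\} \subseteq \permuta{\fimap A}$ is automatic because matrices with the same $\pi$-sequence have $\Phi$-images with identical sorted rearrangement.

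The only point requiring genuine care is the rearrangement-invariance step that replaces the ambient $pq$-dimensional majorization $\fimap B \menori \fimap A$ by the partition inequality $\parti B \menori \parti A$; once this identification is in hand, the rest is a direct translation of definitions.
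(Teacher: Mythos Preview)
Your proof is correct. The paper itself does not prove this proposition; it simply quotes it from \cite[Thm.~5.6]{onva}, so there is no in-paper argument to compare against. Your approach---identifying $\fimap{\transport\alpha\beta}\cap\entero^{pq}$ with $\fimap{\dmata}$, invoking Rado's theorem to rewrite $\permuta{\fimap A}$ as the majorization cone, and then reducing the set equality to the very definition of minimality---is exactly the natural one and matches how the result is treated in the source. The rearrangement-invariance step you flag is indeed the only place one must pause, and your handling of it is fine: for $B\in\dmata$, the vectors $\fimap B$ and $\fimap A$ both live in $\real^{pq}$ and have the same coordinate sum $|\alpha|$, so $\fimap B \menori \fimap A$ is literally the statement $\parti B \menori \parti A$ once both $\pi$-sequences are padded to length $pq$.
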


\begin{ejem}
The matrix $A$ in Example~\ref{ejem:mua}, and its transpose $A^\top$ are minimal.
Then the $\permuta {\fimap A} \cap \fimap {\transport \alpha \beta}$ has
exactly two integer points: $\fimap A$ and $\fimap {A^\top}$.
\end{ejem}

We have the following characterization of real-minimality.

\begin{prop} \label{prop:rm-pit}
{\em \cite[Cor.~5.2]{onva}}
Let $A\in \transport \alpha \beta$.
Then $A$ is real-minimal if and only if
\begin{equation*}
\permuta {\fimap A} \cap \fimap {\transport \alpha \beta} = \{ \fimap A \}.
\end{equation*}
\end{prop}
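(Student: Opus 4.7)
The plan is to combine Rado's theorem (Theorem~\ref{teor:rado}) with a convexity argument about the permutohedron. Since $\Phi$ merely lists the entries of a matrix in a fixed order, the sorted sequence of $\Phi(B)$ equals $\parti B$ for every $B\in {\sf M}_{p,q}$. Rado's theorem therefore gives the identification
\begin{equation*}
\permuta{\fimap A} \cap \fimap{\trapo} \;=\; \{\,\fimap B \mid B\in\trapo,\ \parti B \menori \parti A \,\}.
\end{equation*}
With this reformulation, the task reduces to showing that $A$ is real-minimal if and only if every $B\in\trapo$ with $\parti B \menori \parti A$ already equals $A$.

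The ``if'' direction is immediate: if the intersection is $\{\fimap A\}$, then no $B\neq A$ in $\trapo$ can satisfy $\parti B \menori \parti A$, and in particular none can satisfy the strict majorization $\parti B \menore \parti A$, so $A$ is real-minimal by definition. For the ``only if'' direction I would take a real-minimal $A$ together with some $B\in\trapo$ whose image $\fimap B$ lies in $\permuta{\fimap A}$. Real-minimality forbids $\parti B \menore \parti A$, hence $\parti B = \parti A$. The remaining point is to deduce that $B=A$, and for this I would argue by contradiction using the midpoint $C=\tfrac{1}{2}(A+B)$, which lies in $\trapo$ by convexity.

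If $B\neq A$ then $\fimap A$ and $\fimap B$ are two \emph{distinct} vertices of the permutohedron $\permuta{\fimap A}$, since both vectors are rearrangements of the same multiset $\parti A$. Their midpoint $\fimap C$ is a proper convex combination of two distinct extreme points of the polytope $\permuta{\fimap A}$, hence is not itself extreme, and therefore not a vertex. But the vertices of $\permuta{\fimap A}$ are exactly the rearrangements of $\fimap A$, i.e.\ the vectors whose sorted sequence equals $\parti A$. So $\parti C \neq \parti A$, while $\fimap C \in \permuta{\fimap A}$ forces $\parti C \menori \parti A$; together these yield $\parti C \menore \parti A$, contradicting real-minimality of $A$. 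Hence $B=A$, finishing the proof.

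The main obstacle in the argument is the convexity step that rules out a ``ghost'' matrix $B\neq A$ with $\parti B = \parti A$: the definition of real-minimality only excludes \emph{strict} majorization, so one must produce a strictly smaller element, and the natural candidate is the midpoint. What makes this work is the combination of two standard facts---that a midpoint of distinct vertices of a polytope is never extreme, and that the vertices of $\permuta{\fimap A}$ are precisely the permutations of its defining vector---and it is at this juncture that the geometry of the permutohedron, rather than just Rado's set-theoretic description, is essential.
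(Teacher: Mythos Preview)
Your argument is correct. Note, however, that the paper does not supply its own proof of this proposition: it is simply quoted from~\cite[Cor.~5.2]{onva}, so there is no in-paper argument to compare against. Your reconstruction is a clean self-contained proof: the identification via Rado's theorem is immediate, the ``if'' direction is trivial, and for the ``only if'' direction the midpoint trick is exactly the right way to upgrade the definition of real-minimality (which only forbids \emph{strict} majorization) to rule out any $B\neq A$ with $\parti B=\parti A$. The one fact you invoke beyond Rado---that the vertex set of $\permuta{\fimap A}$ is precisely the set of coordinate permutations of $\fimap A$---is stated explicitly in the paper just before Theorem~\ref{teor:rado}, so everything you use is available within the text.
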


Next result is fundamental in our proof of additive stability.
It is a consequence of Theorem~\ref{teor:adi-rm}
and Proposition~\ref{prop:rm-pit}.

\begin{teor} \label{teor:adi-pit}
Let $A\in \trapo$.
Then $A$ is additive if and only if
\begin{equation*}
\permuta {\parti A} \cap \fimap {\transport \alpha \beta} = \{ \fimap A \}.
\end{equation*}
\end{teor}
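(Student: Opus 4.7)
The plan is to deduce Theorem~\ref{teor:adi-pit} by chaining the two results cited just above it, using the elementary observation that a permutohedron depends only on the multiset of coordinates of its defining vector.

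First I would note that, by the definition of $\permuta{\,\cdot\,}$ as the convex hull of all coordinate permutations, one has $\permuta a = \permuta{a_\rho}$ for every $\rho \in \sime m$. Applying this with $a = \fimap A$ and with $\rho$ the permutation that rearranges the entries of $A$ into weakly decreasing order, I get
\begin{equation*}
\permuta{\fimap A} = \permuta{\parti A}.
\end{equation*}
This is the bridge between the statement of Proposition~\ref{prop:rm-pit} (which involves $\permuta{\fimap A}$) and the statement to be proved (which involves $\permuta{\parti A}$).

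Next I would run the equivalences. Suppose $A$ is additive. By Theorem~\ref{teor:adi-rm}, $A$ is real-minimal in $\trapo$. By Proposition~\ref{prop:rm-pit}, this is equivalent to
\begin{equation*}
\permuta{\fimap A} \cap \fimap{\transport \alpha \beta} = \{\fimap A\}.
\end{equation*}
Substituting $\permuta{\fimap A} = \permuta{\parti A}$ from the previous paragraph gives the desired identity. Conversely, if $\permuta{\parti A} \cap \fimap{\transport \alpha \beta} = \{\fimap A\}$, the same substitution in reverse yields $\permuta{\fimap A} \cap \fimap{\transport \alpha \beta} = \{\fimap A\}$, whence $A$ is real-minimal by Proposition~\ref{prop:rm-pit} and hence additive by Theorem~\ref{teor:adi-rm}.

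There is essentially no obstacle here; the theorem is a direct corollary, and the only content beyond quoting the two prior results is the trivial but crucial remark $\permuta{\fimap A} = \permuta{\parti A}$, which holds because permuting the entries of the defining vector of a permutohedron does not change the set over which one takes the convex hull.
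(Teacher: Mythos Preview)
Your proposal is correct and follows exactly the route the paper indicates: the theorem is stated as a consequence of Theorem~\ref{teor:adi-rm} and Proposition~\ref{prop:rm-pit}, and your only added ingredient, the identity $\permuta{\fimap A} = \permuta{\parti A}$, is precisely the bridge needed (with the convention of Remark~\ref{obse:donde-permuta} that $\parti A$ is padded with zeros to length $pq$).
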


\begin{obse} \label{obse:donde-permuta}
Let $A \in {\sf M}_{p,q}$ have nonnegative entries, and let
$M$ be the sum of the entries of $A$.
In Theorem~\ref{teor:adi-pit} we assume that $\parti A$ has $pq$ coordinates,
by adding zeros, if necessary, so that $\permuta{\parti A}$ is a polytope contained
in the hyperplane $H_M$ of $\real^{pq}$ defined by the equation
$\sum_{i\in \nume{pq}} x_i = M$.
\end{obse}

\begin{ejem} \label{ejem:uni-si-adi-no}
Let
\begin{equation*}
A =
\begin{bmatrix}
5 & 5 & 5 & 4 & 4 \\
5 & 5 & 5 & 3 & 3 \\
3 & 3 & 1 & 1 & 0 \\
2 & 1 & 1 & 1 & 0 \\
2 & 1 & 0 & 0 & 0
\end{bmatrix}
\quad \text{and} \quad
X =
\begin{bmatrix}
0 & 0 & 0 & -1 & 1 \\
0 & 0 & 1 & -1 & 0 \\
0 & 1 & 0 & 0 & -1 \\
-1 & -1 & 0 & 2 & 0 \\
1 & 0 & -1 & 0 & 0
\end{bmatrix}.
\end{equation*}
The matrix $X$ comes from the obstruction to additivity
given in~\cite[Fig.~4]{sava}.
Let $\alpha = (23,21,8,5,3)$, $\beta =(17,15,12,9,7)$.
Since the row-sum and column-sum vectors of $X$ are zero,
$A$ and $A- \frac{1}{2}X$ are elements of $\dmata$.
We know, by Example~\ref{ejem:uni-no-adi}, that $A$ is minimal and
$\pi$-unique.
One easily checks that $\parti{A-X} \menore \parti A$.
Therefore $A$ is not real minimal.
Hence, $\permuta {\parti A} \cap \fimap {\transport \alpha \beta}$
has only one integer point, but it is not
$0$-dimensional.\footnote{This example disproves Conjecture~6.7 in~\cite{stem1}.}
\end{ejem}

\begin{obse}
Propositions~5.8 and~5.9 in~\cite{onva} show how to construct real minimal,
respectively, minimal matrices using quadratic programming.
\end{obse}

\section{Proof of the main theorem} \label{sec:prueba}
\hfill

In this section we prove Theorem~\ref{teor:m}.

\begin{nota}
For each convex polytope $P$ let us denote by $\# P$ the number
of integer points in $P$.
\end{nota}

\begin{prop} \label{prop:pt-acota}
Let $A \in \dmatapi$ be additive.
Then, for any triple of partitions $\la$, $\mu$, $\nu$ of the same size,
the sequence of integers
\begin{equation*}
\left\{ \# \permuta{\nu + n \gamma} \cap
\fimap{\transport {\la + n \alpha}{\mu + n \beta}} \right\}_{n \in \natural}
\end{equation*}
is weakly increasing and bounded from above.
\end{prop}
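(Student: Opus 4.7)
I plan to prove monotonicity and boundedness separately. For monotonicity I will exhibit an injection $\iota_n \colon P_n \cap \entero^{pq} \to P_{n+1} \cap \entero^{pq}$ sending $\fimap B \mapsto \fimap{B+A}$. Row- and column-sum additivity immediately places $B+A$ in $\dmatli{\la + (n+1)\alpha}{\mu + (n+1)\beta}$. To verify $\fimap{B+A} \in \permuta{\nu+(n+1)\gamma}$, I will check $\parti{B+A} \menori \nu+(n+1)\gamma$ by combining two standard majorization facts: (a) $\parti{B+A} \menori \parti B + \parti A = \parti B + \gamma$, since for any $k$ positions the sum of $(B+A)$-entries there is at most the top-$k$ sum of $B$ plus the top-$k$ sum of $A$; and (b) when $u,v$ are weakly decreasing with $u \menori v$ and $w$ is weakly decreasing, $u+w \menori v+w$ componentwise. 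Applying (a), then (b) with $w=\gamma$ and the hypothesis $\parti B \menori \nu + n\gamma$, yields $\parti{B+A} \menori \parti B + \gamma \menori \nu+(n+1)\gamma$. Injectivity is clear, so $\#P_n \le \#P_{n+1}$.

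For boundedness I translate coordinates by $-n\fimap A$. Setting $E = B - nA$, integer points of $P_n$ correspond bijectively to integer matrices $E$ with row-sums $\la$, column-sums $\mu$, satisfying $E \ge -nA$ and $\parti{nA+E} \menori \nu+n\gamma$. Expanding the majorization as subset-sum inequalities gives, for each $k$ and each $S \subseteq \nume p \times \nume q$ of size $k$,
\[
\textstyle\sum_{(i,j)\in S} E_{ij} \le \sum_{l=1}^k \nu_l + n\Bigl(\sum_{l=1}^k \gamma_l - \sum_{(i,j)\in S} A_{ij}\Bigr).
\]
The bracketed quantity is nonnegative and vanishes exactly when $S$ is a \emph{top-$k$ set} of $A$, i.e., $\sum_S A = \sum_{l=1}^k \gamma_l$. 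Let $Q \subseteq \real^{pq}$ be the polytope cut out by the row/column-sum equations together with the $n$-independent inequalities $\sum_S E_{ij} \le \sum_{l=1}^k \nu_l$ for all top-$k$ sets $S$ of $A$. Then every integer point of $P_n$ translated by $-n\fimap A$ lies in $Q \cap \entero^{pq}$, so $\#P_n \le \#(Q \cap \entero^{pq})$, and it remains to show $Q$ is bounded.

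To bound $Q$ I will analyze its recession cone. A nonzero recession direction $F$ would have zero row and column sums and $\sum_S F_{ij} \le 0$ for every top-$k$ set of $A$. Let $x_i,y_j$ be real numbers witnessing additivity, order the positions $(i_1,j_1),\dots,(i_{pq},j_{pq})$ so that $A_{i_l j_l}$ is weakly decreasing (positions within each $A$-level in any order), and set $w_l = x_{i_l}+y_{j_l}$; by additivity $w_l > w_{l+1}$ whenever $A_{i_l j_l} > A_{i_{l+1}j_{l+1}}$. Zero marginals give $\sum_l F_{i_l j_l} w_l = 0$, which Abel summation rewrites as
\[
\sum_{k=1}^{pq-1} (w_k - w_{k+1}) S_k = 0, \qquad S_k := \sum_{l=1}^k F_{i_l j_l} \le 0,
\]
the inequality $S_k \le 0$ holding because the first $k$ positions form a top-$k$ set of $A$. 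Each summand is the product of a nonnegative and a nonpositive factor, hence vanishes; so $S_k = 0$ at every level boundary of $A$, i.e., $\sum_{L_r} F = 0$ for each level set $L_r$ of $A$. Inside a single level $L_r$, freedom in ordering means every prefix sum of $F$-entries on $L_r$ under any permutation is $\le 0$; placing a strictly positive entry first would violate this, so all entries of $F$ on $L_r$ are $\le 0$, and with zero level total they all vanish. Thus $F \equiv 0$, $Q$ is bounded, and $\#(Q \cap \entero^{pq}) < \infty$ delivers the uniform upper bound.

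The main obstacle is this recession-cone analysis. Only the top-$k$ constraints are $n$-independent and therefore survive in $Q$, so all the work of showing boundedness must be done with these constraints alone; it is precisely the additivity of $A$---through the potentials $x_i + y_j$ and the Abel-summation argument---that annihilates every candidate recession direction. The monotonicity step and the reduction to $Q$ are by contrast routine.
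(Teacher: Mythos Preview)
Your monotonicity argument via $B \mapsto B+A$ coincides with the paper's. Your boundedness argument, however, takes a genuinely different route. The paper rescales by $1/n$ and uses Hausdorff convergence of the polytopes $\frac{1}{n}R_n$ to the singleton $\{A\}$ (via Theorem~\ref{teor:adi-pit}) to show that for large $n$ the map $X \mapsto X+A$ is actually \emph{bijective} between consecutive $R_n$'s. You instead translate by $-nA$, keep only the $n$-independent constraints coming from top-$k$ sets of $A$, and show the resulting polytope $Q$ has trivial recession cone via Abel summation against the additivity potentials $x_i+y_j$. Your approach is more elementary (no metric topology) and yields an explicit uniform bound $\#(Q\cap\entero^{pq})$; the paper's approach extracts a bit more structure (eventual bijectivity of $f_n$ and the identity $\parti{X+kA}=\parti X+k\gamma$, recorded in Remark~\ref{obse:pt-acota} and used in the proof of Theorem~\ref{teor:m}), though for the proposition as stated either suffices.

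There is one imprecision in your recession-cone step. You order positions with $A$ weakly decreasing and ``positions within each $A$-level in any order'', then assert that each Abel summand $(w_k-w_{k+1})S_k$ is a product of a nonnegative and a nonpositive factor. But $w_k \ge w_{k+1}$ has only been justified at level boundaries; within a level the $w$-values need not be monotone for an arbitrary ordering, so some summands could have the wrong sign. The fix is one line: for the Abel step, order the positions within each $A$-level so that $w$ is weakly decreasing there as well (additivity guarantees every $w$-value on a higher level strictly exceeds every $w$-value on a lower one, so the global sequence $w_1\ge w_2\ge\cdots$ is then weakly decreasing). This gives $S_k=0$ whenever $w_k>w_{k+1}$, in particular at every level boundary $k_r$. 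Since $S_{k_r}=\sum_{L_1\cup\cdots\cup L_r}F$ is independent of the within-level ordering, you may \emph{then} vary that ordering freely to run your final step (every prefix sum on $L_r$ is $\le 0$, forcing all entries to vanish). With this small adjustment the argument is complete.
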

\begin{proof}
Let us denote $p = \max\{\longi\alpha, \longi \la \}$ and
$q = \max \{ \longi \beta, \longi \mu \}$.
If $\longi{\nu + n \gamma} >pq$, any vector in $\permuta{\nu + n \gamma}$
would have more than $pq$ non-zero coordinates.
Hence, the intersection $\permuta{\nu + n \gamma} \cap
\fimap{\transport {\la + n \alpha}{\mu + n \beta}}$ would be empty.
So, we assume without loss of generality that $\longi{\nu + n \gamma} \le pq$,
and we add, if necessary, zeros at the end of $\nu + n\gamma$, so that
$\permuta{\nu + n \gamma}$ is contained in $\real^{pq}$.

Since $A$ is additive, then by Theorem~\ref{teor:adi-pit},
$\fimapinv{\permuta \gamma} \cap \transport \alpha \beta = \{ A\}$.
For each $n\in \natural_0$, let
\begin{equation*}
R_n = \fimapinv{\permuta{\nu + n \gamma}} \cap \transport {\la + n \alpha}{\mu + n \beta}.
\end{equation*}
If for all $n\in \natural_0$, $R_n = \vacio$, our claim follows trivially.
So, we assume the opposite and denote by $m$ the smallest $n$ such that $R_n \neq \vacio$.
For any $n \ge m$, let $\fun {f_n}{R_n}{R_{n+1}}$ be defined by $f_n(X) = X + A$,
for each $X\in R_n$.
Since $\parti {X+A} \menori \parti X + \parti A$, then, by Theorem~\ref{teor:rado},
$f_n$ is a well-defined injective map, that sends integer points to integer points.
We will show that for $n$ sufficiently large, $f_n$ is bijective.
From this our claim will follow, because the number of integer points in $R_n$ is
at most the cardinality of $\dmatli {\la + n \alpha}{\mu + n \beta}$, which is finite.
Let $B$ be a ball of radius $\frac{1}{3}$ and center $A$ in ${\sf M}_{p,q}$.
Since the sequence of polytopes $\{Q_n\}_{n \ge m}$, where
\begin{equation*}
Q_n = \textstyle
\fimapinv{\permuta{\frac{1}{n} \nu + \gamma}} \cap
\transport {\frac{1}{n}\la + \alpha}{\frac{1}{n} \mu + \beta},
\end{equation*}
converges to $\fimapinv{\permuta \gamma} \cap \transport \alpha \beta$, in the
Hausdorff metric~\cite[p.~279]{mun},
there is an $N\in \natural$ such that for all $n\ge N$, one has $Q_n \subseteq B$.
Let $c(1), \dots , c(pq)$ be the elements of $\nume p \times \nume q$ arranged in such a way
that $\parti A = (a_{c(1)}, \dots, a_{c(pq)})$.
Let
\begin{equation*}
d = 1 + \max\{ a_{c(i)} - a_{c(i+1)} \mid 1 \le i < pq \},
\end{equation*}
and let $n \ge \max\{ N, 3d \}$.
Let $X \in R_n$.
Hence $\frac{1}{n} X$ is in $Q_n$.
Therefore $\left\| \frac{1}{n} X - A \right\| \le \frac{1}{3}$, and we conclude
\begin{equation*}
\textstyle
\left|\frac{1}{n} x_{c(i)} - a_{c(i)} \right| \le \frac{1}{3},
\end{equation*}
for all $i \in \nume{pq}$.
If $a_{c(i)} > a_{c(i+1)}$, then $\frac{1}{n} x_{c(i)} - \frac{1}{n} x_{c(i+1)} \ge \frac{1}{3}$.
So, we get
\begin{equation*} \textstyle
x_{c(i)} - x_{c(i+1)} \ge \frac{n}{3} \ge d > a_{c(i)} - a_{c(i+1)} > 0.
\end{equation*}
From this we get $x_{c(i)} > x_{c(i+1)}$, as well, $x_{c(i)} - a_{c(i)} > x_{c(i+1)} - a_{c(i+1)}$.
If $a_{c(i)} = a_{c(i+1)}$, we can relabel $c(i)$ and $c(i+1)$ so as to get
$x_{c(i)} \ge x_{c(i+1)}$ and $x_{c(i)} - a_{c(i)} \ge x_{c(i+1)} - a_{c(i+1)}$.
In both cases we get
\begin{equation*}
\parti X = (x_{c(1)}, \dots, x_{c(pq)}) \text{ and }
\parti{X - A} = \parti X - \parti A.
\end{equation*}
Thus, $\parti{X - A}  \menori \nu + n\gamma - \gamma$.
In particular, $x_{c(i)} - a_{c(i)} \ge 0$, for all $i \in \nume{pq}$.
It follows that $X-A$ is in $R_{n-1}$ and therefore $f_{n-1}$ is bijective.
The proposition is proved.
\end{proof}

\begin{obse} \label{obse:pt-acota}
Let $E_n = \fimapinv{\permuta{\nu + n \gamma}} \cap \dmatli {\la + n \alpha}{\mu + n \beta}$.
It follows from the proof of the previous proposition that, there is an $m\in \natural$,
such that, for all $n\ge m$, the maps $\fun{g_n}{E_n}{E_{n+1}}$, defined by $g_n(X) = X+A$,
are bijective.
Hence, if $E_m= \{ X_1, \dots, X_l \}$, then $E_{m + k} = \{ X_1 + kA, \dots, X_l + kA \}$.
Moreover, for each $j\in \nume l$, there is an ordering
$c^j(1), \dots, c^j(pq)$ of the elements of $\nume p \times \nume q$ such that,
$\parti A = (a_{c^j(1)}, \dots, a_{c^j(pq)}) = \gamma$,
$\parti{X_j} = (x_{j,c^j(1)}, \dots, x_{j,c^j(pq)})$ and
$\parti{X_j + kA} = \parti{X_j} + k \gamma$.
Note that two orderings $c^i$, $c^j$ may differ only at numbers $s$, $t$
such that $a_{c^i(s)} = a_{c^i(t)}$ and $a_{c^j(s)} = a_{c^j(t)}$.
\end{obse}

The following construction is equivalent to the Gelfand-Tsetlin patterns
appearing in~\cite[p.~313]{stan}, but it is more suited for our purposes.
It follows the same idea from~\cite[\S3]{pava}.

\begin{getepat}
Let $\sigma$, $\gamma \in \noneg^\ell$ be vectors such that, $\sigma$ is weakly
decreasing and $\sum_{i\in \nume \ell} \sigma_i = \sum_{i\in \nume \ell} \gamma_i$.
We denote by $\gtpat \sigma \gamma$ the set of real triangular arrays
$X= \gtele xij$ with $1 \le i \le j \le \ell$, such that
the following conditions are satisfied:

(Po) $x_{i,j} \ge 0$, for all $1 \le i \le j \le \ell$;

(CS) $\sum_{j = i}^m x_{i,j}  \ge \sum_{j = i+1}^{m+1} x_{i+1,j}$,
for all $1\le i \le m < \ell$.

(Sh) $\sum_{j = i}^\ell x_{i,j} = \sigma_i$, for all $i\in \nume \ell$.

(Co) $\sum_{i=1}^j x_{i,j} = \gamma_j$, for all $j \in \nume \ell$.

\noindent
The \emph{shape} of such an array is $\sigma$ and the \emph{content} is $\gamma$.

The triangles with integer coordinates in $\gtpat \sigma \gamma$ correspond to
semistandard Young tableaux  of shape $\sigma$ and content $\gamma$ in the following way:
if $T$ is such a tableau, we define $x_{i,j}$ as the number of $j$'s in the $i$-th row
of $T$.
Then, condition (CS) is equivalent to $T$ being column-strict.
$\gtpat \sigma \gamma$ is a convex polytope and
\begin{equation*}
\# \gtpat \sigma \gamma = K_{\sigma\gamma}.
\end{equation*}
We denote by $C_\sigma = \gtele cij$ the triangular array defined by $c_{i,i} = \sigma_i$
for all $i \in \nume \ell$ and $c_{i,j} = 0$ if $1\le i < j \le \ell$.
Then, $C_\sigma$ is the only element in $\gtpat \sigma \sigma$.
\end{getepat}

\begin{prop} \label{prop:k-acota}
Let $\gamma$ and $\nu$ be a partitions and $\rho$ be a vector with nonnegative
integer entries, whose coordinates add up to $|\nu|$.
Then, the sequence $\left\{ K_{\nu + n \gamma, \rho + n \gamma} \right\}_{n \in \natural}$ is
weakly increasing and bounded from above.\footnote{See Proposition~5.2 in~\cite{stem1}
for a more general result.}
\end{prop}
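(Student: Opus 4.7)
The plan is to model the argument on the proof of Proposition~\ref{prop:pt-acota}, now in the setting of Gelfand--Tsetlin patterns, via the polytopal identity $K_{\sigma,\rho} = \#\gtpat{\sigma}{\rho}$. Writing $P_n = \gtpat{\nu+n\gamma}{\rho+n\gamma}$, I would define
\begin{equation*}
\fun{f_n}{P_n}{P_{n+1}}, \qquad f_n(X) = X + C_\gamma,
\end{equation*}
and verify that $f_n$ is well-defined on the polytopes and sends integer arrays to integer arrays. Conditions (Sh) and (Co) are immediate, and (Po) is clear since $C_\gamma$ has nonnegative entries. For (CS), adding $C_\gamma$ contributes $+\gamma_i$ to the left-hand side and $+\gamma_{i+1}$ to the right-hand side of $\sum_{j=i}^{m} x_{i,j} \ge \sum_{j=i+1}^{m+1} x_{i+1,j}$, so the slack grows by $\gamma_i - \gamma_{i+1} \ge 0$ because $\gamma$ is a partition. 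Since $f_n$ is manifestly injective and respects integrality, the sequence $\{K_{\nu+n\gamma,\,\rho+n\gamma}\}_{n\in \natural}$ is weakly increasing.

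For boundedness I would show that $f_n$ is surjective for all sufficiently large $n$, so the sequence is eventually constant. The rescaled polytopes $Q_n := \frac{1}{n}P_n$ have homogeneous inequalities (Po) and (CS), while the affine conditions (Sh) and (Co) have right-hand sides $\nu_i/n+\gamma_i$ and $\rho_j/n+\gamma_j$, tending to $\gamma_i$ and $\gamma_j$. Hence $Q_n \to \gtpat{\gamma}{\gamma} = \{C_\gamma\}$ in the Hausdorff metric, and for any prescribed $\epsilon>0$ there is an $N$ with the property that each $Y \in P_{n+1}$ with $n+1\ge N$ satisfies $|Y_{i,j}/(n+1) - (C_\gamma)_{i,j}| \le \epsilon$ for every $i,j$. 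Given such a $Y$, I want $X := Y - C_\gamma$ to lie in $P_n$: (Sh) and (Co) are automatic, while for (Po) the off-diagonal case is trivial, and for diagonal entries with $\gamma_i > 0$ the Hausdorff bound gives $Y_{i,i} \ge (n+1)(\gamma_i - \epsilon) \ge \gamma_i$ once $\epsilon < \gamma_i$ and $n$ is large. For (CS) one needs $\sum_{j=i}^{m} Y_{i,j} - \sum_{j=i+1}^{m+1} Y_{i+1,j} \ge \gamma_i - \gamma_{i+1}$; when $\gamma_i = \gamma_{i+1}$ this is exactly (CS) for $Y$, while when $\gamma_i > \gamma_{i+1}$ the Hausdorff bound forces the left-hand side to be approximately $(n+1)(\gamma_i - \gamma_{i+1})$, which dominates the constant $\gamma_i - \gamma_{i+1}$ for $n$ sufficiently large.

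Choosing $\epsilon$ smaller than every positive value among the finite list $\{\gamma_i\} \cup \{\gamma_i - \gamma_{i+1}\}$, and $N$ accordingly, produces the desired eventual bijectivity of $f_n$. The main technical point I expect to wrestle with is the case split in the (CS) verification: in the strict case $\gamma_i > \gamma_{i+1}$ the required slack is produced by the rescaling and grows linearly in $n$, whereas in the tied case $\gamma_i = \gamma_{i+1}$ no extra slack appears and one must instead read it off from (CS) for $Y$ itself. Once these two sources of slack are accounted for, bijectivity of $f_n$ for large $n$ follows, yielding eventual constancy of the sequence and hence the required upper bound.
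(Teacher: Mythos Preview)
Your proposal is correct and follows essentially the same route as the paper's proof: the paper also defines $f_n(X)=X+C_\gamma$, checks (Po), (CS), (Sh), (Co) for well-definedness, and then proves eventual surjectivity by rescaling, using Hausdorff convergence of $\gtpat{\frac{1}{n}\nu+\gamma}{\frac{1}{n}\rho+\gamma}$ to $\{C_\gamma\}$, and handling (CS) via the same dichotomy on $\gamma_i=\gamma_{i+1}$ versus $\gamma_i>\gamma_{i+1}$. The only cosmetic difference is that the paper fixes the ball radius to $\tfrac{1}{3\ell}$ from the outset (so that summing up to $\ell$ off-diagonal error terms is already absorbed), whereas you defer the choice of $\epsilon$; when you finalize, make sure your $\epsilon$ accounts for this factor of $\ell$.
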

\begin{proof}
If for all $n\in \noneg$, $\gtpat{\nu + n \gamma}{\rho + n \gamma}$ is empty,
our claim follows trivially.
So, we assume the opposite and denote by $m$ the smallest $n$ such that
$\gtpat{\nu + n \gamma}{\rho + n \gamma}$ is non-empty.
For any $n \ge m$, we define a map
\begin{equation*}
\fun {f_n}{\gtpat{\nu + n \gamma}{\rho + n \gamma}}{\gtpat{\nu + (n + 1) \gamma}{\rho + (n + 1) \gamma}},
\end{equation*}
by $f_n(X) = X + C_\gamma$, for each $X \in \gtpat{\nu + n \gamma}{\rho + n \gamma}$.
It is straightforward to check that $f_n$ is a well-defined injective map that sends
integer points to integer points.
We will show that for $n$ sufficiently large, $f_n$ is bijective.
From this our claim will follow, because the number of integer points in
$\gtpat{\nu + n \gamma}{\rho + n \gamma}$ is finite.
Let $\ell = \max\{ \longi\gamma, \longi \nu \}$.
Let $B$ be a ball of radius $\frac{1}{3\ell}$ and center $C_\gamma$.
Since the sequence of polytopes $\{G_n\}_{n\ge m}$, where
\begin{equation*}
\textstyle
G_n = \gtpat{\frac{1}{n}\nu + \gamma}{\frac{1}{n} \rho + \gamma},
\end{equation*}
converges, in the Hausdorff metric, to $\gtpat \gamma\gamma = \{ C_\gamma \}$,
there is a number $N$, such that for all $n \ge N$, we have
$G_n \subseteq B$.
Let $n > N$ and let $X = \gtele xij$ be an element of
$\gtpat{\nu + (n+1) \gamma}{\rho + (n+1)\gamma}$.
We claim that $X - C_\gamma$ is in $\gtpat{\nu + n \gamma}{\rho + n \gamma}$.
Note that the conditions (Sh) and (Co) are straightforward.
Since $\frac{1}{n+1} X$ is in $G_{n+1} \subseteq B$, we have
$\left| \frac{1}{n+1} x_{i,i} - \gamma_i \right| \le \frac{1}{3\ell}$,
for all $i\in \nume \ell$, and $0 \le \frac{1}{n+1} x_{i,j} \le \frac{1}{3\ell}$,
for all $1 \le i < j \le \ell$.
Combining these inequalities, we get
\begin{equation*}
\gamma_i - \frac{1}{3} \le \frac{1}{n+1} \sum_{j=i}^k x_{i,j}
\le \gamma_i + \frac{1}{3},
\end{equation*}
for all $1 \le i \le k \le \ell$.
Multiplying by $n+1$ and substracting $\gamma_i$, we have
\begin{equation} \label{ecua:prueba-kos}
n \gamma_i - \frac{n+1}{3} \le \sum_{j=i}^k x_{i,j} - \gamma_i
\le n \gamma_i + \frac{n+1}{3}.
\end{equation}
If $\gamma_i >0$, letting $k=1$ in equation~\eqref{ecua:prueba-kos},
we get
\begin{equation*}
\textstyle
x_{i,i} - \gamma_i \ge n \gamma_i - \frac{n+1}{3} \ge \frac{2n-1}{3} >0.
\end{equation*}
Thus, $X-C_\gamma$ satisfies (Po).
It remains to prove (CS) for $1 \le i \le k < \ell$, when $\gamma_i > \gamma_{i+1}$.
Applying~\eqref{ecua:prueba-kos} twice, for $i$ and $i+1$, we obtain
\begin{align*}
\textstyle
\sum_{j=i}^k x_{i,j} - \gamma_i & \textstyle \ge n \gamma_i - \frac{n+1}{3} \\
& \textstyle \ge n \gamma_{i+1} + n - \frac{n+1}{3} \ge n \gamma_{i+1} + \frac{n+1}{3} \\
& \textstyle \ge \sum_{j=i+1}^{k+1} x_{i+1,j} - \gamma_{i+1}.
\end{align*}
We have proved that $X - C_\gamma$ is in $\gtpat{\nu + n \gamma}{\rho + n \gamma}$.
Therefore $f_n$ is surjective.
The proof is complete.
\end{proof}

\begin{proof}[Proof of Theorem~\ref{teor:m}]
Let $A\in \dmatapi$ be an additive matrix, and let $\la$, $\mu$, $\nu$ be partitions
of the same size.
For $k\in \noneg$, denote
$s_k = \coefili{\la + k \alpha}{\mu + k \beta}{\nu + k \gamma}$.
If $s_k = 0$, for all $k$, there is nothing to prove.
So, we assume that there is some $n\in \noneg$ such that $s_k >0$, and denote by $m$
the smallest $k$, such that $s_k >0$.
Since $A$ is additive, Theorem~\ref{teor:adi-uni-kron} implies
that $\coefili \alpha \beta \gamma = 1$.
Then, by Theorem~\ref{teor:kron-mani} and induction on $k$, we have that
$s_{k+1} \ge s_k$, for all $k \ge m$.
It remains to show that the sequence $\{ s_k \}_{k\in\noneg}$ is bounded from above.
By equations~\eqref{ecua:kron} and~\eqref{ecua:young}, one has
\begin{equation*}
s_k = \bili{\cara{\la + k \alpha} \otimes \cara{\mu + k \beta}}{\cara{\nu + k \gamma}}
\le \bili{\permu{\la + k \alpha} \otimes \permu{\mu + k \beta}}{\cara{\nu + k \gamma}}.
\end{equation*}
Therefore, equation~\eqref{ecua:permu-mat} yields
\begin{equation} \label{ecua:suma-kos}
s_k \le \sum_{X \in \dmatli{\la + k \alpha}{\mu + k \beta}}  K_{\nu + k\gamma, \parti X}.
\end{equation}
Condition~\eqref{ecua:kos-pos} implies that the sum in equation~\eqref{ecua:suma-kos}
runs over all $X$ such that,
\begin{equation*}
X \in \fimapinv{\permuta{\nu + k \gamma}} \cap \dmatli {\la + k \alpha}{\mu + k \beta}.
\end{equation*}
Let $m \in \natural$ be as in Remark~\ref{obse:pt-acota}, and write
\begin{equation*}
\fimapinv{\permuta{\nu + m \gamma}} \cap \dmatli {\la + m \alpha}{\mu + m \beta} =
\{ X_1, \dots, X_l \}.
\end{equation*}
Let $\rho(j) = \parti{X_j}$ and $k \ge m$.
By Proposition~\ref{prop:pt-acota}, the number of non-zero summands
in~\eqref{ecua:suma-kos} is $l$, and, by Remark~\ref{obse:pt-acota},
for any $j\in \nume l$, $\parti{X_j + (k-m)A} = \rho(j) + (k-m)\gamma$.
By Proposition~\ref{prop:k-acota}, for each $j\in \nume l$, there is $m(j)\ge m$,
such that, the sequence
\begin{equation*}
\{ K_{\nu + m\gamma + (k-m)\gamma,\, \rho(j) + (k-m)\gamma} \}_{k\ge m(j)}
\end{equation*}
is constant.
Let $N$ bigger than $m$ and all $m(j)$'s.
Let $k \ge N$.
Then, the sum in the right side of inequality~\eqref{ecua:suma-kos} is equal to
\begin{equation*} \textstyle
\sum_{j \in \nume l}  K_{\nu + k\gamma,\, \rho(j) + (k-m)\gamma},
\end{equation*}
and this number is the same for all $k \ge N$.
Hence, the sequence $\{ s_k \}_{k\in \natural}$ is bounded, and the
theorem is proved.
\end{proof}

\section{Relation between Stembridge's hypothesis and additivity} \label{sec:equiv}
\hfill

In this Section we show that Stembridge's hypothesis~\eqref{ecua:stem} and
additivity are closely related.
On the one hand, additivity implies condition~\eqref{ecua:stem},
on the other, this condition implies the existence of a certain unique additive matrix.
We also give a new characterization of additivity (Theorem~\ref{teor:adi-n-uni}).

\begin{teor} \label{teor:adi-stem}
Let $A \in \dmatapi$ be an additive matrix.
Then, $\bili{\permu{n\alpha} \otimes \permu{n\beta}}{\cara{n\gamma}} = 1$,
for all $n \in \natural$.
\end{teor}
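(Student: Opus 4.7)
My plan is to reduce the bilinear pairing to a sum of Kostka numbers via the identity~\eqref{ecua:permu-mat}, then use additivity of $nA$ (together with Theorem~\ref{teor:adi-pit}) to show that only one term in that sum survives, and that this term equals $1$.

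More precisely, I would first apply~\eqref{ecua:permu-mat} to write
\begin{equation*}
\bili{\permu{n\alpha} \otimes \permu{n\beta}}{\cara{n\gamma}}
= \sum_{X \in \dmatli{n\alpha}{n\beta}} K_{n\gamma,\,\parti X}.
\end{equation*}
By~\eqref{ecua:kos-pos}, a summand is non-zero exactly when $\parti X \menori n\gamma$, i.e.\ when $\fimap X \in \permuta{n\gamma}$. Hence the sum is supported on the set of integer points of $\permuta{n\gamma} \cap \fimap{\transport{n\alpha}{n\beta}}$.

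Next, I would observe that if $A\in \dmatapi$ is additive with witnesses $x_1,\dots,x_p$, $y_1,\dots,y_q$, then $nA$ is additive in $\transport{n\alpha}{n\beta}$ with the same witnesses (the implication $a_{i,j} > a_{k,l} \implica x_i + y_j > x_k + y_l$ is unchanged by scaling). Since $\parti A = \gamma$, we have $\parti{nA} = n\gamma$. Applying Theorem~\ref{teor:adi-pit} to $nA$ gives
\begin{equation*}
\permuta{n\gamma} \cap \fimap{\transport{n\alpha}{n\beta}} = \{\fimap{nA}\}.
\end{equation*}
In particular the only integer point of this set is $\fimap{nA}$, so the only $X\in \dmatli{n\alpha}{n\beta}$ contributing to the sum is $X = nA$, contributing $K_{n\gamma,\,n\gamma}$.

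Finally, since there is a unique semistandard Young tableau of shape $n\gamma$ and content $n\gamma$ (namely the one with all entries of row $i$ equal to $i$), we have $K_{n\gamma,\,n\gamma}=1$, which yields the claim. There is no serious obstacle here: the argument is essentially bookkeeping once one notices that additivity is preserved under positive scaling, so that Theorem~\ref{teor:adi-pit} can be applied at level $n$ exactly as at level $1$.
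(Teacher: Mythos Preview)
Your proposal is correct and follows essentially the same approach as the paper's proof: both reduce via~\eqref{ecua:permu-mat} to a sum of Kostka numbers, observe that additivity of $A$ implies additivity of $nA$, invoke Theorem~\ref{teor:adi-pit} to conclude that $nA$ is the unique matrix in $\dmatli{n\alpha}{n\beta}$ with $\parti X \menori n\gamma$, and finish with $K_{n\gamma,\,n\gamma}=1$. Your version merely spells out a few steps (the witness argument for additivity of $nA$, the reason $K_{n\gamma,\,n\gamma}=1$) that the paper leaves implicit.
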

\begin{proof}
Since $A$ is additive, $nA$ is additive.
Then, by Theorem~\ref{teor:adi-pit}, we have
\begin{equation*}
\permuta {n\gamma} \cap \fimap {\transport {n\alpha}{n\beta}} = \{ \fimap {nA} \}.
\end{equation*}
In other words, $nA$ is the only matrix $X \in \dmatli {n\alpha}{n\beta}$ satisfying
$\parti X \menori n\gamma$.
Besides, since $\parti{nA} = n\gamma$, $K_{n\gamma, \parti{nA}} =1$.
Then, the theorem follows from identity~\eqref{ecua:permu-mat}.
\end{proof}

An alternative proof of this Theorem can be given, using Theorem~\ref{teor:a-mu} and
identity~\eqref{ecua:permu-mat}.
If $\bili{\permu{n\alpha} \otimes \permu{n\beta}}{\cara{n\gamma}} = 1$,
for all $n \in \natural$, we cannot assure the existence of an additive matrix
in $\dmatapi$ (see Example~\ref{ejem:equiv-falla}).
But we can prove.

\begin{teor} \label{teor:as}
Let $\alpha$, $\beta$, $\gamma$ be partitions of the same size.
If $\bili{\permu{n\alpha} \otimes \permu{n\beta}}{\cara{n\gamma}} = 1$,
for all $n \in \natural$.
Then, there is a unique matrix $A \in\permuta \gamma \cap \dmata$.
Besides $A$ is additive and $\kos {n\gamma,}{\parti A} =1$.\footnote{Compare
with Theorem~6.1 in~\cite{stem1}.}
\end{teor}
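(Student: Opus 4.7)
The plan is to apply identity~\eqref{ecua:permu-mat} to rewrite the hypothesis as a uniqueness statement about integer points of transportation polytopes sitting inside permutohedra, and then combine this with the geometric characterization of additivity in Theorem~\ref{teor:adi-pit}.

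First, for each $n\in \natural$, identity~\eqref{ecua:permu-mat} turns the hypothesis into
\begin{equation*}
1 = \bili{\permu{n\alpha} \otimes \permu{n\beta}}{\cara{n\gamma}} = \sum_{X\in \dmatli{n\alpha}{n\beta}} \kos{n\gamma,}{\parti X}.
\end{equation*}
By~\eqref{ecua:kos-pos} and Theorem~\ref{teor:rado}, the positive summands correspond precisely to those $X$ with $\fimap X \in \permuta{n\gamma}$. Hence for every $n\in \natural$ there is a unique matrix $X_n \in \dmatli{n\alpha}{n\beta}$ satisfying $\fimap{X_n} \in \permuta{n\gamma}$, and for it $\kos{n\gamma,}{\parti{X_n}} = 1$. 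Setting $n=1$ produces the required unique matrix $A := X_1 \in \dmata$ with $\fimap A \in \permuta \gamma$; moreover $nA \in \dmatli{n\alpha}{n\beta}$ and $\parti{nA} = n\parti A \menori n\gamma$, so $nA = X_n$, and therefore $\kos{n\gamma,}{n\parti A} = 1$ for every $n$.

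To prove that $A$ is additive I will invoke Theorem~\ref{teor:adi-pit}: it suffices to verify that $Q := \permuta{\parti A} \cap \fimap{\trapo} = \{\fimap A\}$. The preimage $\fimapinv Q$ is the polytope obtained by intersecting $\trapo$ with the finite system of inequalities $\sum_{(i,j)\in S} a_{i,j} \le \sum_{i=1}^{|S|} \parti A_i$ indexed by subsets $S \subseteq \nume p \times \nume q$, which encode the condition $\fimap B \in \permuta{\parti A}$ via Theorem~\ref{teor:rado}. Since $\alpha$, $\beta$ and $\parti A$ are integer vectors, every vertex of $\fimapinv Q$ is rational. If $Q$ were to contain a point different from $\fimap A$, then $\fimapinv Q$ would be positive-dimensional and would admit a vertex $B \neq A$ with rational entries. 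Choosing $n\in \natural$ so that $nB$ has integer entries, the matrix $nB$ lies in $\dmatli{n\alpha}{n\beta}$ and satisfies $\parti{nB} \menori n\parti A \menori n\gamma$, so $\fimap{nB} \in \permuta{n\gamma}$. Since $nB \neq nA = X_n$, this contradicts the uniqueness of $X_n$ established above.

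The main obstacle I anticipate is the rationality-and-scaling step in the additivity argument: the hypothesis only controls integer valued character multiplicities at each individual $n$, whereas Theorem~\ref{teor:adi-pit} requires the whole real-polytope intersection $Q$ to collapse to a single point. The bridge must simultaneously exploit that both $\trapo$ and $\permuta{\parti A}$ are defined by linear inequalities with integer coefficients (so that any hypothetical extra point in $Q$ can be replaced by a rational vertex) and that the hypothesis holds for all $n$ (so that a common denominator can be cleared to produce an integral counterexample to the uniqueness of $X_n$).
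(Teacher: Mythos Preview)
Your proof is correct and follows essentially the same route as the paper's: use identity~\eqref{ecua:permu-mat} together with~\eqref{ecua:kos-pos} to extract, for each $n$, a unique $X_n\in\dmatli{n\alpha}{n\beta}$ with $\parti{X_n}\menori n\gamma$; set $A=X_1$ and identify $X_n=nA$; then appeal to Theorem~\ref{teor:adi-pit}, using rationality of the vertices of $\permuta{\parti A}\cap\fimap{\trapo}$ and clearing denominators to reduce to the uniqueness of some $X_m$. The only cosmetic difference is that the paper argues directly that every vertex $V$ of the intersection must equal $A$, whereas you phrase the same step as a contradiction (assuming a second point forces a second vertex $B\neq A$); both arguments are equivalent.
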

\begin{proof}
Let $n\in \natural$.
Then, by identity~\eqref{ecua:permu-mat}, there is only
one matrix $X_n\in \dmatli {n\alpha}{n\beta}$ with $\parti{X_n} \menori n\gamma$.
And, for this unique matrix, one has $K_{n\gamma, \parti {X_n}} =1$.
Let $A = X_1$.
Then $\parti A \menori \gamma$, and $\parti{nA} \menori n\gamma$.
Since $nA \in \dmatli {n\alpha}{n\beta}$, then $nA$ must be equal to $X_n$.
We claim that $A$ is additive.
By Theorem~\ref{teor:adi-pit}, it is enough to prove
\begin{equation} \label{ecua:rtf}
\permuta {\parti A} \cap \fimap {\transport \alpha \beta} = \{ \fimap A \}.
\end{equation}
Let $V$ be a matrix such that $\fimap V$ is any vertex of
$\permuta {\parti A} \cap \fimap {\transport \alpha \beta}$.
Due to the nature of the defining inequalities of this polytope,
$\fimap V$ has rational coordinates.
Therefore, there is some natural number $m$, such that $m \fimap V$ has integer
coordinates.
But $m \fimap V$ is in $\permuta {m\parti A} \cap \fimap {\transport {m\alpha}{m\beta}}$.
The uniqueness of $X_m$ implies $mV = mA$.
Thus, $\fimap A = \fimap V$.
This means that $\fimap A$ is the only vertex in
$\permuta {\parti A} \cap \fimap {\transport \alpha \beta}$.
Then, equation~\eqref{ecua:rtf} holds, and the theorem is proved.
\end{proof}

\begin{ejem} \label{ejem:equiv-falla}
Let $\alpha = (7,1)$, $\beta = (5,3)$ and $\gamma = (4,4)$.
We will show that, for all $n\in \natural$,
$\bili{\permu{n\alpha} \otimes \permu{n\beta}}{\cara{n\gamma}} = 1$, and that
there is no additive matrix in $\dmatapi$.
First, note that $\dmatapi = \vacio$.
Hence, the second assertion holds.
For each $t\in \natural$, let $X_ t = \matricita {4n+t}{3n-t}{n-t}t$.
Then,
\begin{equation*}
\dmatli {n\alpha}{n\beta} = \{ X_t \mid 0\le t \le n \}.
\end{equation*}
The only $t$ for which, $\parti{X_t} \menori n \gamma$, is $t=0$.
Since, $\kos {n\gamma,}{n(4,3,1)} = 1$, condition~\eqref{ecua:kos-pos}
and identity~\eqref{ecua:permu-mat} imply the first assertion.
\end{ejem}

The following result is implicit in the proofs of Theorems~\ref{teor:adi-stem}
and~\ref{teor:as}.

\begin{teor} \label{teor:adi-n-uni}
Let $A$ be a plane partition.
Then $A$ is additive if and only if $nA$ is minimal and $\pi$-unique,
for all $n\in \natural$.
\end{teor}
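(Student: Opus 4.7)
The plan is to prove each implication separately. The forward implication follows quickly from Theorem~\ref{teor:a-mu}, while the backward direction bootstraps from integer points to real points of a certain polytope, in the spirit of the proof of Theorem~\ref{teor:as}.

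For the forward implication, suppose $A$ is additive with real witnesses $x_1,\dots,x_p$ and $y_1,\dots,y_q$. I would note that these same witnesses certify additivity of $nA$: since $n\ge 1$, the strict inequality $(nA)_{i,j}>(nA)_{k,l}$ is equivalent to $a_{i,j}>a_{k,l}$, while the inequalities $x_i+y_j>x_k+y_l$ do not depend on $n$. Hence $nA$ is additive for every $n\in\natural$, and Theorem~\ref{teor:a-mu} gives that $nA$ is minimal and $\pi$-unique.

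For the backward implication, let $\alpha$, $\beta$ be the row- and column-sum vectors of $A$ and set $\gamma=\parti A$. By Theorem~\ref{teor:adi-pit}, additivity of $A$ is equivalent to the polytope $P=\permuta{\gamma}\cap\fimap{\transport{\alpha}{\beta}}$ consisting only of $\fimap A$, and $\fimap A\in P$ is automatic. The first step is to translate the hypothesis into a statement about integer points: by Rado's Theorem~\ref{teor:rado}, the integer points of $\permuta{n\gamma}\cap\fimap{\transport{n\alpha}{n\beta}}$ correspond exactly to integer matrices $B\in\dmatli{n\alpha}{n\beta}$ with $\parti B\menori n\gamma$. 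Minimality of $nA$ excludes the possibility $\parti B\menore n\gamma$, and $\pi$-uniqueness forces $\parti B=n\gamma$ to imply $B=nA$. Thus the unique integer point of $\permuta{n\gamma}\cap\fimap{\transport{n\alpha}{n\beta}}$ is $\fimap{nA}$ for every $n\in\natural$.

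The final step is the rational-scaling argument already used in the proof of Theorem~\ref{teor:as}. Let $V$ be any vertex of $P$. Since $P$ is cut out by linear equalities and inequalities with integer coefficients and integer right-hand sides (those defining the permutohedron and those defining the transportation polytope), $\fimap V$ has rational coordinates. Choosing $m\in\natural$ so that $m\fimap V$ is an integer vector, we have $m\fimap V\in\permuta{m\gamma}\cap\fimap{\transport{m\alpha}{m\beta}}$, which by the previous paragraph forces $m\fimap V=\fimap{mA}$ and hence $V=A$. The bounded polytope $P$ therefore has $\fimap A$ as its unique vertex, so $P=\{\fimap A\}$, and Theorem~\ref{teor:adi-pit} concludes that $A$ is additive. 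The main point where some care is required, though it is routine, is checking the rationality of the vertices of $P$; beyond this I do not anticipate any genuine obstacle.
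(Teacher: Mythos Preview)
Your proof is correct and shares the same underlying engine as the paper's, namely the rational-vertex scaling argument from the proof of Theorem~\ref{teor:as}. The difference is one of packaging: the paper routes the backward implication through character theory, first deducing from minimality and $\pi$-uniqueness of $nA$ (via identity~\eqref{ecua:permu-mat}) that $\bili{\permu{n\alpha}\otimes\permu{n\beta}}{\cara{n\gamma}}=1$ for all $n$, and then invoking Theorem~\ref{teor:as} as a black box. You instead go straight to the geometry, observing directly that the hypothesis pins down the unique integer point of $\permuta{n\gamma}\cap\fimap{\transport{n\alpha}{n\beta}}$ for every $n$, and then running the vertex argument yourself. Your route is slightly more economical, since it avoids the detour through $\permu{\,\cdot\,}$ and Kostka numbers; the paper's route has the virtue of making explicit that the theorem is really a corollary of the equivalence established in Section~\ref{sec:equiv}.
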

\begin{proof}
If $A$ is additive, then $nA$ is additive, for all $n\in \natural$.
The result follows from Threorem~\ref{teor:a-mu}.
For the converse, let $\gamma = \parti A$, and assume that,
for all $n\in \natural$, $nA$ is minimal and $\pi$-unique.
Then, $\bili{\permu{n\alpha} \otimes \permu{n\beta}}{\cara{n\gamma}} = 1$,
for all $n \in \natural$.
Since $A$ is the unique matrix in $\permuta\gamma \cap \dmata$,
Theorem~\ref{teor:as} implies that $A$ is additive.
\end{proof}

\begin{ejem}
Let $A$ and $X$ be defined as in Example~\ref{ejem:uni-si-adi-no}.
We now that $A$ is minimal and $\pi$-unique, but not additive (Example~\ref{ejem:uni-no-adi}).
Besides $2A$ is not minimal because
\begin{equation*}
\parti{2A -X} = (10^5,9^2,7^2,6^2,5^2,3^2,2^4,1^2) \menore
(10^6,8^2,6^4,4^2,2^6)= \parti{2A}.
\end{equation*}
\end{ejem}

\section{Relations between additive stability and other stability known results} \label{sec:aplica}
\hfill

In this section we explore the relation of previous stability known results to
additive stability.

Let us consider the additive triples $((|\alpha|), \alpha, \alpha)$ and
$(\alpha, \alpha^\prime, (1^{|\alpha|}))$, see Example~\ref{ejem:adi-tri}.
Practically, all known stability properties follow from Theorem~\ref{teor:m}
applied to some instance of one of these triples, together with some
symmetry of Kronecker coefficients.
In other words, every known stability property is related, up to
symmetry of its graph (Paragraph~\ref{para:sime}), to some particular case
of the additive matrix from Example~\ref{ejem:adi-tri}, or
equivalently, to some particular case of the additive matrix from
Example~\ref{ejem:young}.
The power of additive stability is that we can construct very easily many other
examples of additive matrices, thus producing new instances of stability.
However, the method of proof of this theorem does not produce an
explicit bound $L$ for stability.

Murnaghan's stability~\eqref{ecua:mur} follows from $\alpha = (1)$.
In this case, there are several known bounds $L$ for stability~\cite{bor, bri, vestab1, vdiag}.

The stability from equation~\eqref{ecua:mur-mas-gen-bis}, which includes~\eqref{ecua:mur-gen}
as a particular case, follows, in case $n_1 = \cdots = n_t$, from Theorem~\ref{teor:m}
applied to the additive triple $((|\alpha|), \alpha, \alpha)$ together with
the symmetry $\coefili \zeta \eta \theta = \coefili \eta \theta \zeta$.
The diagrammatic method from~\cite{vdiag} provides better results in the case of the
triple $((|\alpha|), \alpha, \alpha)$, because it gives very precise bounds for
stability and also because it permits to consider independent parameters
$n_1, \dots, n_t$ in equation~\eqref{ecua:mur-mas-gen-bis}.

Finally, the stability property from equation~\eqref{ecua:estab-cubi}
follows from Theorem~\ref{teor:m} applied to the additive triple
$(\alpha, \alpha^\prime, (1^{|\alpha|}))$ in the particular case
in which $\alpha = (q^r)$ is a rectangular partition, together with a
symmetry of Kronecker coefficients.\footnote{Compare with Proposition~A2
in~\cite{stem2} and Theorem~4.6 in~\cite{estab2}.}
Formula~\eqref{ecua:estab-cubi} gives, under some conditions, a very good bound
for stability, which is useful in some applications (see~\cite[\S5]{estab2}).

Note that for arbitrary $\alpha$ the additive triple
$(\alpha, \alpha^\prime, (1^{|\alpha|}))$ yields a new stability
property (see Corollary~\ref{coro:adi-young}).

\vskip 1.5pc
\textbf{Acknowledgements.}
I would like to thank to Richard Brualdi, Jes\'us De Loera, Shmuel Onn, Igor Pak,
Greta Panova and Adolfo Torres-Ch\'azaro for fruitful conversations.

\vskip 2.5pc
{\small

}
\end{document}